\pdfoutput=1
\documentclass[12pt]{article}
\usepackage{amsmath,amsfonts,amssymb,amsthm}
\usepackage{pdfpages}
\textwidth=6.5in \textheight=8.5in \topmargin=0in \oddsidemargin=0in

\newcommand{\N}{\mathbb{N}}
\newcommand{\Z}{\mathbb{Z}}

\newcommand{\C}{\mathbb{C}}

\newcommand{\twosum}[2]{\sum_{\substack{#1\\#2}}}

\newtheorem{thm}{Theorem}[section]
\newtheorem{lem}[thm]{Lemma}
\begin{document}
\title{Estimates for character sums and Dirichlet $L$-functions to smooth moduli}
\author{A.J. Irving\\
Centre de recherches math\'ematiques, Universit\'e de Montr\'eal}
\date{}

\maketitle

\begin{abstract}
We use the $q$-analogue of van der Corput's method to estimate short character sums to smooth moduli.  If $\chi$ is a primitive Dirichlet character modulo a squarefree, $q^\delta$-smooth integer $q$ we show that 
$$L(\frac12,\chi)\ll_\epsilon q^{\frac{27}{164}+O(\delta)+\epsilon}.$$
\end{abstract}

\section{Introduction}

Suppose $\chi$ is a nontrivial Dirichlet character to modulus $q$.  An important question in analytic number theory is  the estimation  of the character sum 
$$S=\sum_{M<n\leq M+N}\chi(n).$$
Orthogonality of characters gives 
$$\sum_{n\pmod q}\chi(n)=0$$
so it is enough to consider the case that the length, $N$, of the sum satisfies $N<q$.  It is then conjectured that the sum should exhibit squareroot cancellation.  Specifically, for any $\epsilon>0$ we expect that
\begin{equation}\label{conj}
S\ll_\epsilon q^\epsilon N^{\frac12}.
\end{equation}
This is far from being proven but there are various results which give partial progress on the problem for certain ranges of $N$.

A number of applications of estimates for $S$ involve Dirichlet $L$-functions.  For example, when proving bounds for $L(s,\chi)$ the dependence on $q$ is determined by the quality of available results on $S$.  This then has implications for the zero-free regions and zero density results that one can prove.  If $\Re s$ is close to $1$ then it is useful to study very short sums for which $\log N=o(\log q)$.  On the other hand, if one wishes to estimate $L(\frac12,\chi)$ then the case $N\approx \sqrt{q}$ is important.  We will consider the latter situation in this work.  

The first nontrivial estimate for $S$ was found in 1918  by P\'olya and Vinogradov who showed that 
$$S\ll \sqrt{q}\log q.$$
For a proof see, for example, Iwaniec and Kowalski \cite[Theorem 12.5]{ik}.  This establishes (\ref{conj}) when the sum is long, $N\gg q$.  For shorter sums it gives a nontrivial estimate provided that $N\gg \sqrt{q}\log q$.  

Burgess \cite{burgess57, burgess62, burgess63} devised a method for estimating the sum $S$ when it is too short for the P\'olya-Vinogradov inequality to be applied.  His result gives 
$$S\ll_{\epsilon,r} N^{1-\frac{1}{r}}q^{\frac{r+1}{4r^2}+\epsilon},$$
for any $\epsilon>0$ and $r\in\N$, provided that either $q$ is cube-free or $r\leq 3$.  In the former situation it gives a power-saving for $S$ provided that $N\geq q^{\frac14+\delta}$, for some $\delta>0$.  Taking $r=2$ and $N\ll\sqrt{q}$ the Burgess estimate becomes 
$$S\ll_\epsilon q^{\frac{7}{16}+\epsilon}.$$
Burgess deduced from this that 
$$L(\frac12,\chi)\ll_\epsilon q^{\frac{3}{16}+\epsilon}.$$
This improves on the trivial ``convexity'' bound of $q^{\frac14}$.

Burgess's result is currently the sharpest known for arbitrary moduli, in particular it has not been improved for prime $q$.  However, by using methods based on Weyl differencing, various improvements can be given if we make assumptions on the prime factorisation of $q$.  In particular better results are known if $q$ is either powerful or smooth.

We say that $q$ is powerful if 
$$\prod_{p|q}p$$
is small relative to $q$.  In that case the sum may be estimated by a result of Iwaniec \cite{iwaniecdirichlet}, which extends previous work on prime-power moduli of Postnikov \cite{postnikov} and Gallagher \cite{gallagher}.  These works contain various applications to $L(s,\chi)$ when $\Re s$ is close to $1$.  In a recent paper \cite{milicevic}, Mili\'cevi\'c developed a quite general form of $p$-adic van der Corput method.  When $q$ is a prime power this can be used to give estimates for $S$ which are completely analogous to those arising from the classical theory of exponent pairs.  If $q$ is a prime power, $q=p^n$, then Mili\'cevi\'c obtained the estimate 
$$L(\frac12,\chi)\ll_\epsilon p^rq^{\theta+\epsilon},$$
for some fixed $r$ and $\theta\approx 0.1645$.  

This paper is concerned with smooth moduli $q$, for which the first result was given by Heath-Brown in \cite{rhbhybrid}.  He showed that if $q=q_0q_1$ is squarefree then 
\begin{equation}\label{rhbbound}
S\ll_\epsilon q^\epsilon\left(\sqrt{N}q_1^{\frac12}+\sqrt{N}q_0^{\frac14}\right).
\end{equation}
We say that $q$ is $q^\delta$-smooth if all the prime factors of $q$ are at most $q^\delta$.  In that case we can find a factorisation $q=q_0q_1$ with $q_1\in [q^{\frac13},q_1^{\frac13+\delta}]$ and (\ref{rhbbound}) becomes 
\begin{equation}\label{rhbbound2}
S\ll_\epsilon \sqrt{N}q^{\frac16+O(\delta)+\epsilon}.
\end{equation}
If $N\ll \sqrt{q}$ then this gives 
$$S\ll_\epsilon q^{\frac{5}{12}+O(\delta)+\epsilon}$$
which is better than the Burgess bound for sufficiently small $\delta$.  Heath-Brown deduced from this that 
$$L(\frac12,\chi)\ll_\epsilon q^{\frac16+O(\delta)+\epsilon}.$$
This is analogous to Weyl's estimate 
$$\zeta(\frac12+it)\ll_\epsilon t^{\frac16+\epsilon}.$$

The key idea in proving the estimate (\ref{rhbbound}) is to apply  van der Corput differencing  (called the $A$-process) to reduce the modulus of the sum from $q$ to $q_0$.  This latter sum may then be completed, which is an analogue of the van der Corput $B$-process, to give the result.  Graham and Ringrose \cite{grahamringrose} generalised this procedure, giving a $q$-analogue of the van der Corput $A^kB$ estimate.  A consequence of their result is that for any $\eta>0$ there exists a $\delta>0$ such that if $q$ is $q^\delta$ smooth and $N\geq q^\eta$ then one can give a power-saving for $S$.  This has applications to the distribution of zeros near $\Re s=1$ of $L(s,\chi)$, but not to $L(\frac12,\chi)$ since the optimal choice of $k$ is then $1$.  

By combining the methods for powerful and smooth $q$ one can obtain results for a larger set of moduli.  This was recently studied by Chang \cite{chang}, in which the object is to obtain a nontrivial bound for $N$ as small as possible.  On the other hand Goldmakher \cite{goldmakher} studied long sums, that is $N$ almost as large as $q$.  He demonstrated that for certain $q$ a small improvement on the P\'olya-Vinogradov inequality is possible.

The aim of this paper is to prove an estimate for $S$ which, when $q$ is sufficiently smooth,  improves on (\ref{rhbbound2}) when $N\approx \sqrt{q}$.  For such $q$ we will deduce an estimate for $L(\frac12,\chi)$ which breaks the Weyl barrier of $q^{\frac16}$.   

\begin{thm}\label{charthm}
Suppose that $q$ is squarefree and $q^\delta$ smooth for some $\delta>0$.  Then, for any primitive Dirichlet character $\chi$ to modulus $q$, any $N\leq q$  and any $\epsilon>0$ we have 
$$S=\sum_{M\leq n\leq M+N}\chi(n)\ll_\epsilon N^{\frac{23}{41}}q^{\frac{11}{82}+O(\delta)+\epsilon}.$$
\end{thm}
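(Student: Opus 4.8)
The plan is to push the Graham--Ringrose $q$-analogue of van der Corput's $A^kB$-process, but to optimize the exponent not by taking a single value of $k$ but by iterating the $A$-process a fixed number of times and then completing the sum, balancing the losses carefully in the style of a classical exponent-pair calculation. Since $q$ is $q^\delta$-smooth we may write $q=q_0q_1\cdots q_k$ where each $q_i$ is a product of prime factors chosen so that the $q_i$ have prescribed sizes (up to a factor $q^{O(\delta)}$); the exponent $\tfrac{23}{41}$, $\tfrac{11}{82}$ strongly suggests that the optimal number of differencing steps here is $k=2$ (one more than Heath-Brown's single step), so I would set up the argument for general $k$, carry it through symbolically, and then specialize.

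The first step is the $A$-process applied $k$ times. Starting from $S=\sum_{M<n\le M+N}\chi(n)$, one applies van der Corput differencing with shift parameter controlled by a divisor $r_1$ of $q$ of size $R_1$; after opening the square and using multiplicativity of $\chi$ one is left with a sum of shorter character sums modulo $q/r_1$, twisted by $\chi$ evaluated at a ratio like $(n+h)\overline n$ or a related rational function. Iterating, after $j$ steps we obtain (up to the usual diagonal contribution and a factor $(N/(R_1\cdots R_j))^{\text{something}}$ times $R_1\cdots R_j$) a sum whose ``inner'' character sum has modulus $q/(r_1\cdots r_j)$ and whose argument is an iterated difference quotient of $\chi$; the relevant bookkeeping is the standard Weyl-differencing recursion, where at each stage one loses a factor roughly $(\text{length})^{1/2}$ in the trivial term and passes to a sum with $2^j$-fold differencing of the argument. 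After $k$ steps the surviving modulus is $q_0=q/(q_1\cdots q_k)$, and the inner sum is a complete (after completion) character sum modulo $q_0$ with argument a fixed rational function of the differencing variables.

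The second step is completion of the $q_0$-sum (the $B$-process analogue): one completes the short sum modulo $q_0$ using additive characters, reducing matters to bounding, for each frequency, a character sum modulo $q_0$ of a rational function of the $2^k$ shift variables $h_1,\dots,h_k$ (and the frequency), twisted by $e(ah/q_0)$. The key input here is the Weil bound for such mixed character sums: after clearing denominators one gets an algebraic exponential sum in the $h_i$, and provided the relevant polynomial is not degenerate one obtains squareroot cancellation $q_0^{1/2+\epsilon}$ in each variable. I expect the main obstacle to be exactly this point: controlling the \emph{bad} (degenerate) frequencies and shift-tuples for which the Weil bound fails — i.e.\ verifying that the rational function arising after $k$-fold differencing is, generically in $h_1,\dots,h_k$, genuinely nonconstant as a function modulo each prime $p\mid q_0$, so that only a thin set of $(h_1,\dots,h_k)$ contributes the trivial bound $q_0$ rather than $q_0^{1/2}$. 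For $k=1$ this degeneracy analysis is classical and easy; for $k\ge 2$ the iterated difference quotient is messier and one must show the locus of degenerate tuples has codimension $\ge 1$ and bound its contribution separately, which is where the real work lies and where the precise numerical exponent gets pinned down.

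The final step is to assemble the pieces: combining the factor $R_1\cdots R_k = q_1\cdots q_k$ and the powers of $N/(R_1\cdots R_k)$ from the $k$ rounds of Cauchy--Schwarz with the completion loss $q_0^{\epsilon}(1 + q_0^{1/2})$ from the good frequencies and the $q_0$-sized but sparse contribution from the bad ones, one gets a bound of the shape $S \ll_\epsilon q^\epsilon \big( N^{\alpha_k} (q_1\cdots q_k)^{\beta_k} + N^{\gamma_k} q_0^{1/2}(\cdots)\big)$ with explicit $\alpha_k,\beta_k,\gamma_k$. Choosing the sizes $Q_i = q^{\text{(rational number)}}$ of the $q_i$ to equalize the competing terms, and taking $k=2$, yields the stated $N^{23/41}q^{11/82+O(\delta)+\epsilon}$; the $O(\delta)$ absorbs the slack in realizing the prescribed divisor sizes from a $q^\delta$-smooth $q$ and the $q^{O(\delta)}$ number-of-divisors and gcd factors that appear throughout. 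At $N\approx\sqrt q$ this gives $S\ll q^{23/82 + 11/82 + O(\delta)+\epsilon} = q^{34/82+O(\delta)+\epsilon} = q^{17/41+\cdots}$, beating $q^{5/12}$, and a contour-integration / approximate-functional-equation argument then converts it into the $L(\tfrac12,\chi)$ bound of the abstract.
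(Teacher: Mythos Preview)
Your proposal has a genuine gap: the process $A^kB$ with $k=2$ (or any $k$) cannot produce the exponent $N^{23/41}q^{11/82}$. In exponent-pair language, $A^2B(0,1)=(\tfrac{1}{14},\tfrac{11}{14})$, which at $N\approx\sqrt q$ gives $S\ll q^{3/7+\epsilon}$, strictly \emph{worse} than Heath-Brown's $AB$ bound $q^{5/12+\epsilon}$. Indeed the paper notes explicitly that for $N\approx\sqrt q$ the optimal choice in the Graham--Ringrose $A^kB$ family is $k=1$; pushing $k$ higher helps only for much shorter sums. The pair $(\tfrac{11}{82},\tfrac{57}{82})$ that gives the theorem is $ABA^3B(0,1)$, not $A^2B(0,1)$, and this is not a cosmetic difference: the intermediate $B$ is essential.

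Concretely, the paper's argument runs as follows. A single $A$-step with $q=q_0q_1$ reduces $S$ to sums $T=\sum_{n\in I}\chi_0(n)\overline{\chi_0(n+h)}$ to modulus $q_0$. One then \emph{completes} $T$ (the $B$-step), obtaining a sum over frequencies $x$ of the Weil-type sums $W_{\chi_0,h}(x)=\sum_n\chi_0(n)\overline{\chi_0(n+h)}e_{q_0}(nx)$. The new idea, absent from your outline, is to now apply the $A$-process \emph{three more times to this dual sum over $x$}, using the multiplicativity of $W$ in the modulus (Lemma~\ref{Wmult}) to split $q_0=r_0r_1r_2r_3$. After a final completion one is left with a complete sum $K_{\chi,h}(h_1,h_2,h_3,y)=\sum_x e_{r_0}(xy)\prod_I W_{\chi,h}(x+\sum_{i\in I}h_i)^{\sigma(I)}$, a genuinely high-dimensional object. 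Bounding this with square-root cancellation is well beyond Weil; it requires Deligne's machinery and is handled in an appendix by Fouvry, Kowalski and Michel. Your ``Weil bound in each variable'' step would not suffice here, and the degeneracy analysis you anticipate is also more subtle: the bad locus is where \emph{all} of $y,h_1,\dots,h_k$ vanish modulo a prime, controlled via the factor $(q_0,y,\prod h_i)^{1/2}$ in Lemma~\ref{Kbound}.
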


\begin{thm}\label{Lthm}
For $q$ and $\chi$ as in Theorem \ref{charthm} and any $\epsilon>0$ we have 
$$L(\frac12,\chi)\ll_\epsilon q^{\frac{27}{164}+O(\delta)+\epsilon}.$$
\end{thm}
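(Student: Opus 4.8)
The plan is to combine the character sum estimate of Theorem~\ref{charthm} with the approximate functional equation for $L(\tfrac12,\chi)$ and partial summation. Since $\chi$ is primitive modulo $q$, the approximate functional equation gives
$$L(\tfrac12,\chi)\ll_\epsilon \Big|\sum_{n}\frac{\chi(n)}{\sqrt n}\,V\!\Big(\frac{n}{\sqrt q}\Big)\Big|+\Big|\sum_{n}\frac{\bar\chi(n)}{\sqrt n}\,V\!\Big(\frac{n}{\sqrt q}\Big)\Big|+q^{-10},$$
where $V$ is a smooth weight with $V(x)\ll_A(1+x)^{-A}$ for every $A$, so that both sums are, up to a negligible tail, supported on $n\le q^{1/2+\epsilon}$; equivalently one shifts a contour in a Mellin representation of $L(\tfrac12,\chi)$. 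As $\bar\chi$ is also primitive modulo $q$, it suffices to bound a single sum of the shape $\sum_{n\le x}\chi(n)n^{-1/2}$ with $x\le q^{1/2+\epsilon}$, uniformly in the weight, and Theorem~\ref{charthm} applies throughout (with $M=0$).

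Next I would apply Abel summation. Writing $S(t)=\sum_{n\le t}\chi(n)$,
$$\sum_{n\le x}\frac{\chi(n)}{\sqrt n}=\frac{S(x)}{\sqrt x}+\frac12\int_1^x\frac{S(t)}{t^{3/2}}\,dt,$$
and inserting the bound $S(t)\ll_\epsilon t^{23/41}q^{11/82+O(\delta)+\epsilon}$ of Theorem~\ref{charthm}, the boundary term is $\ll x^{23/41-1/2}q^{11/82+O(\delta)+\epsilon}=x^{5/82}q^{11/82+O(\delta)+\epsilon}$. In the integral the exponent of $t$ in the integrand is $\tfrac{23}{41}-\tfrac32=-\tfrac{77}{82}$, which lies in $(-1,0)$, so the integral is dominated by its upper endpoint and is again $\ll x^{5/82}q^{11/82+O(\delta)+\epsilon}$. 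Hence $\sum_{n\le x}\chi(n)n^{-1/2}\ll_\epsilon x^{5/82}q^{11/82+O(\delta)+\epsilon}$ for all relevant $x$; taking $x$ to be $\sqrt q$ up to a factor $q^\epsilon$ (the largest length that occurs) and using $11/82=22/164$ gives
$$L(\tfrac12,\chi)\ll_\epsilon q^{5/164}\cdot q^{11/82+O(\delta)+\epsilon}=q^{27/164+O(\delta)+\epsilon},$$
as claimed.

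As for where the real content lies: this deduction is entirely standard once Theorem~\ref{charthm} is available, so the only points that genuinely need checking are the bookkeeping ones — that the $t$-exponent $-\tfrac{77}{82}$ in the integral exceeds $-1$, so that no logarithmic or lower-range loss occurs and the integral does not dominate the boundary term, and that the optimal truncation point is exactly $\sqrt q$, which is forced since the functional equation renders sums of length more than $q^{1/2+\epsilon}$ negligible while the bound $x^{5/82}q^{11/82+\cdots}$ is increasing in $x$. Thus the difficulty is concentrated entirely in Theorem~\ref{charthm}, which is used here as a black box; I would also remark that $\tfrac{27}{164}<\tfrac16$, so the resulting estimate indeed breaks the Weyl barrier.
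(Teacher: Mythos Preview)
Your proof is correct and follows essentially the same route as the paper: approximate functional equation, partial summation, and insertion of Theorem~\ref{charthm}. The only cosmetic difference is that the paper keeps the smooth weight $V$ through the summation by parts and splits the resulting integral at $\sqrt q$, whereas you truncate at $q^{1/2+\epsilon}$ first; the arithmetic $\tfrac{5}{164}+\tfrac{22}{164}=\tfrac{27}{164}$ is the same either way.
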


Theorem \ref{charthm} will be proved by means of a $q$-analogue of the van der Corput $ABA^3B$ process.  This is unsurprising since the original van der Corput $ABA^3B$ estimate can be used to obtain the bound 
$$\zeta(\frac12+it)\ll_\epsilon t^{\frac{27}{164}+\epsilon},$$
see Graham and Kolesnik \cite[Theorem 4.1]{gk} for a proof.  
The proof of Theorem \ref{charthm} has many similarities with our work \cite{mydivisor}, in which a $q$-analogue of $BA^kB$ was used to estimate short Kloosterman sums.  However, the addition of an initial $A$-process leads to some technical changes.  As in \cite{mydivisor} a crucial role is played by an estimate for a complete multidimensional exponential sum.  This takes a different form to that in \cite{mydivisor} due to the replacement of Kloosterman fractions by Dirichlet characters as well as the extra differencing step.  The key estimate is proved in an appendix by Fouvry, Kowalski and Michel, using tools developed in their work \cite{fkm}.  

The exponents in Theorem \ref{charthm}, and similar results, can be predicted by means of the theory of exponent pairs.  This is described, in the context of the classical van der Corput method, by Graham and Kolesnik in \cite[Chapter 3]{gk}.  Suppose that $(k,l)$ is an exponent pair derived from the trivial pair $(0,1)$ by applications of the $A$ and $B$ processes.  We then expect that by applying the $q$-analogue of the same sequence of processes we can establish the bound 
\begin{equation}\label{generalbound}
\sum_{M\leq n<M+N}f(n)\ll_{\epsilon,k,l} q^{O_{k,l}(\delta)+\epsilon}(q/N)^k N^l,
\end{equation}
for many commonly occurring functions $f:\Z/q\Z\rightarrow\C$, provided that $q$ is squarefree, $q^\delta$-smooth and larger than $N$.  For example, the result of Theorem \ref{charthm} may be written 
$$S\ll_\epsilon q^{O(\delta)+\epsilon}(q/N)^{\frac{11}{82}}N^{\frac{57}{82}}$$
and $ABA^3B(0,1)=(\frac{11}{82},\frac{57}{82})$.  Similarly, Heath-Brown's estimate (\ref{rhbbound2}) corresponds to the pair $AB(0,1)=(\frac16,\frac23)$.  
 
Our exponent $\frac{27}{164}\approx 0.1647$ in Theorem \ref{Lthm} is slightly larger than $0.1645$  obtained by Mili\'cevi\'c.  However, his result only applies to prime-power moduli, a density $0$ set of $q$, whereas a positive proportion of $q$ will be sufficiently smooth and therefore satisfy the hypotheses of Theorem \ref{Lthm}.   In order to obtain the same exponent as Mili\'cevi\'c we would need to iterate the $A$ and $B$ processes an arbitrary number of times, thereby obtaining a sufficiently general form of (\ref{generalbound}).  This would be technically challenging, but a more significant issue would be to give bounds for the progressively more complex complete sums which would arise.  

Throughout this work we adopt the standard convention that $\epsilon$ denotes a small positive quantity whose value may differ at each occurrence.  If $f$ is a function on $\Z/q\Z$ then we will, without comment, extend $f$ to a function on $\Z$ (or on $\Z/q'\Z$ for any multiple $q'$ of $q$).  

\subsection*{Acknowledgements}

This work was completed whilst I was a CRM-ISM postdoctoral fellow at the Universit\'e de Montr\'eal.    I am very grateful to professors Fouvry, Kowalski and Michel for their appendix proving the crucial complete sum estimate.  

\section{The $A$-Process}

In this section we will describe the $q$-analogue of the van der Corput $A^k$-process for quite general functions $f$.  We begin with a single iteration.  When $f$ is a Dirichlet character the following lemma was first proved by Heath-Brown in \cite{rhbhybrid}, it has since been modified for a number of different $f$.  A result with a similar level of generality to ours was given by Blomer and Mili\'cevi\'c in \cite[Lemma 12]{blomermil}.  

\begin{lem}\label{onediff}
Suppose $q=q_0q_1$.  For $i\in \{1,2\}$ let $f_i:\Z/q_i\Z\rightarrow\C$ with $f_i(n)\ll 1$ for all $n$.  Let $f(n)=f_0(n)f_1(n)$ be a function on $\Z/q\Z$  and let $I$ be an interval of length at most $N$.  We then have   
$$\left|\sum_{n\in I}f(n)\right|^2\ll q_1\left(N+\sum_{0<|h|\leq N/q_1}\left|\sum_{n\in I(h)}f_0(n)\overline{f_0(n+q_1h)}\right|\right),$$ 
where $I(h)$ are subintervals of $I$.  
\end{lem}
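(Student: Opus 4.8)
The plan is to mimic the classical van der Corput $A$-process (Weyl differencing) in the $q$-aspect, exploiting the factorisation $q=q_0q_1$ to pick out the $q_1$-periodic part of $f$. First I would use the Chinese Remainder Theorem perspective: since $f=f_0f_1$ with $f_0$ of period $q_0$ and $f_1$ of period $q_1$, and $\gcd(q_0,q_1)$ need not be trivial in general but $q$ is squarefree in the application (though the lemma is stated without that hypothesis, so I would just work with the product structure as given), the sum $\sum_{n\in I}f(n)$ can be reorganised by shifting $n\mapsto n+q_1 h$ for $0\le h<H$ where $H\approx N/q_1$: this shift does not change $f_1$ but shifts $f_0$. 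Averaging over $h$ and over $n$, one writes
$$H\sum_{n\in I}f(n)=\sum_{n}\sum_{0\le h<H}f_1(n)f_0(n+q_1h)\mathbf{1}_{n+q_1h\in I}+(\text{small error}),$$
or more cleanly one writes $\sum_{n\in I}f(n)=\frac1H\sum_{m}\sum_{0\le h<H}f(m+q_1h)\mathbf 1_{m+q_1h\in I}$ after reindexing $n=m+q_1h$, since for each residue the inner $h$-sum has about $H$ terms and the double counting is exact up to boundary terms of size $O(q_1)$ per residue class — but since we only care about the union, the clean statement is that $\sum_{n\in I}f(n)=\frac1H\sum_{m\in I'}f_1(m)\sum_{0\le h<H}f_0(m+q_1h)$ up to controlled error, where the key point is $f_1(m+q_1h)=f_1(m)$.

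Next I would apply Cauchy–Schwarz in the variable $m$ (which ranges over an interval of length $\ll N$), using $|f_1(m)|\ll1$, to obtain
$$\left|\sum_{n\in I}f(n)\right|^2\ll \frac{N}{H^2}\sum_{m}\left|\sum_{0\le h<H}f_0(m+q_1h)\right|^2.$$
Expanding the square gives a double sum over $h_1,h_2\in[0,H)$ of $\sum_m f_0(m+q_1h_1)\overline{f_0(m+q_1h_2)}$; substituting $h=h_1-h_2$ and $m\mapsto m-q_1h_2$ turns the inner sum into $\sum_{m\in I(h)}f_0(m)\overline{f_0(m+q_1h)}$ over a subinterval $I(h)\subseteq I$. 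The diagonal $h=0$ contributes $\ll HN$ (using the trivial bound $|f_0|\ll1$ on an interval of length $\ll N$), and off-diagonal terms come in pairs $\pm h$; collecting, $\frac{N}{H^2}\cdot\left(HN+N\sum_{0<|h|<H}|\cdots|\right)$, and since $H\approx N/q_1$ this is $\ll q_1\left(N+\sum_{0<|h|\le N/q_1}\left|\sum_{m\in I(h)}f_0(m)\overline{f_0(m+q_1h)}\right|\right)$, which is the claimed bound.

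The main obstacle I anticipate is not any deep input but rather the careful bookkeeping of the reindexing and the boundary/error terms: one must check that restricting $n=m+q_1h$ to lie in $I$ produces genuine subintervals $I(h)$ after the final substitution (the intersection of $I$ with a shift of $I$ is an interval), and that the "count of $h$ per residue class is $H+O(1)$" discrepancies, when weighted by $f$ and summed, are absorbed into the $N$ term. A secondary technical point is that the lemma is stated without assuming $\gcd(q_0,q_1)=1$, so I would avoid invoking CRT explicitly and instead argue purely via the periodicity $f_1(m+q_1h)=f_1(m)$, which holds regardless. Everything else is Cauchy–Schwarz and a change of variables, entirely parallel to the classical Weyl differencing step, so there is no serious analytic difficulty — the content is combinatorial/organisational.
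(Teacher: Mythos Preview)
Your approach is essentially identical to the paper's: extend $f$ by zero outside $I$, shift by $q_1h$ to exploit the periodicity of $f_1$, average over $h\in[1,H]$ with $H=\lfloor N/q_1\rfloor$, apply Cauchy--Schwarz in the remaining variable (supported on an interval of length $O(N)$), expand the square and change variables. Your worry about boundary errors is unfounded --- the identity $HS=\sum_{h=1}^H\sum_m f(m+q_1h)\mathbf{1}_{m+q_1h\in I}$ is exact, with no $O(\cdot)$ discrepancy --- but you should note that the case $N<q_1$ (when $H=0$) must be handled separately by the trivial bound $|S|\le N$, as the paper does.
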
 

\begin{proof}
If $N<q_1$ then $q_1N>N^2$ so the result follows from the trivial bound 
$$\sum_{n\in I}f(n)\ll N.$$
We therefore assume, for the remainder of the proof, that $N\geq q_1$.  We let 
$$a_n=\begin{cases}
f(n) & n\in I\\
0 & \text{otherwise.}\\
\end{cases}$$
The sum of interest is 
$$S=\sum_n a_n.$$
Since $N\geq q_1$ we have $H=[N/q_1]\geq 1$ and thus, using that $f_1$ has period $q_1$, we obtain
$$HS=\sum_{h=1}^H \sum_n a_{n+q_1h}=\sum_nf_1(n)\twosum{h=1}{n+q_1h\in I}f_0(n+q_1h).$$
The outer summation is supported on an interval of length $O(N)$ so we may use Cauchy's inequality and the bound $f_1(n)\ll 1$ to deduce that 
$$H^2S^2\leq N\sum_n\left|\twosum{h=1}{n+q_1h\in I}f_0(n+q_1h)\right|^2=N\sum_{h_1,h_2=1}^H \twosum{n}{n+q_1h_i\in I}f_0(n+q_1h_1)\overline{f_0(n+q_1h_2)}.$$
Changing variable in the final summation this becomes 
\begin{eqnarray*}
H^2S^2&\leq& N\sum_{h_1,h_2=1}^H \twosum{n\in I}{n+q_1(h_2-h_1)\in I}f_0(n)\overline{f_0(n+q_1(h_2-h_1))}\\
&\ll& NH\sum_{|h|\leq H}\left|\sum_{n\in I(h)}f_0(n)\overline{f_0(n+q_1h)}\right|,
\end{eqnarray*}
where $I(h)$ is a subinterval of $I$.  We apply the bound $f_0(n)\ll 1$ to the terms with $h=0$ and conclude that 
$$S^2 \ll NH^{-1}\left(N+\sum_{0<|h|\leq H}\left|\sum_{n\in I(h)}f_0(n)\overline{f_0(n+q_1h)}\right|\right).$$
The result follows since $NH^{-1} \ll q_1$.  
\end{proof}

For any complex-valued function $f$ we introduce the notation 
$$f(n;h_1,\ldots,h_k)=\prod_{I\subseteq \{1,\ldots,k\}}f\left(n+\sum_{i\in I}h_i\right)^{\sigma(I)},$$
where $\sigma(I)$ denotes that the complex conjugate is taken when $|I|$ is odd. The term $I=\emptyset$ is to be included in the product.  The result of the last lemma may therefore be written 
$$\left|\sum_{n\in I}f(n)\right|^2\ll q_1\left(N+\sum_{0<|h|\leq N/q_1}\left|\sum_{n\in I(h)}f_0(n;q_1h)\right|\right).$$ 
 Iterating this $k$ times we derive the following.

\begin{lem}\label{kdiff}
Suppose that $k\in \N$ and $q=q_0q_1\ldots q_k$.  For $0\leq i\leq k$ let $f_i:\Z/q_i\Z\rightarrow \C$ be functions with $f_i(n)\ll 1$ and let $f(n)=\prod_{i=0}^k f_i(n)$.   If $I$ is an interval of length at most $N$ then 
\begin{eqnarray*}
\lefteqn{\left|\sum_{n\in I}f(n)\right|^{2^k}}\\
&\ll_k& \sum_{j=1}^k N^{2^k-2^{k-j}}q_{k-j+1}^{2^{k-j}}\\
&&\hspace{1cm}+N^{2^k-k-1}(q/q_0)\sum_{0<|h_1|\leq N/q_1}\ldots\sum_{0<|h_k|\leq N/q_k}\left|\sum_{n\in I(h_1,\ldots,h_k)}f_0(n;q_1h_1,\ldots,q_kh_k)\right|.
\end{eqnarray*}
In the last line $I(h_1,\ldots,h_k)$ denotes a subinterval of $I$ and $f_0(n;q_1h_1,\ldots,q_kh_k)$ was defined above.
\end{lem}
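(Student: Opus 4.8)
The plan is to prove this by induction on $k$, using Lemma \ref{onediff} as the engine for each step. The base case $k=1$ is essentially Lemma \ref{onediff} itself: squaring the sum $\sum_{n\in I}f(n)$ with $f=f_0f_1$ (absorbing the product $f_1\cdots f_k$ into a single function on $\Z/(q/q_0)\Z$) gives the term $q_1 N$, which matches $N^{2-1}q_1^{1} = Nq_1$ in the $j=1$ summand, plus $q_1\sum_{0<|h_1|\le N/q_1}|\sum_{n\in I(h_1)}f_0(n;q_1h_1)|$, which is the stated main term with $k=1$ since $N^{2^1-1-1}(q/q_0) = q/q_0 = q_1$. So the base case is immediate once one checks the bookkeeping.

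For the inductive step, suppose the bound holds for $k-1$ applied to the modulus $q' = q_0 q_1\cdots q_{k-1}$ and the function $f' = f_0 f_1 \cdots f_{k-1}$. First I would apply Lemma \ref{onediff} once with the splitting $q = q' \cdot q_k$, taking the ``$f_1$'' role to be $f_k$ and the ``$f_0$'' role to be $f'$, to obtain
$$\Bigl|\sum_{n\in I}f(n)\Bigr|^2 \ll q_k\Bigl(N + \sum_{0<|h_k|\le N/q_k}\Bigl|\sum_{n\in I(h_k)}f'(n;q_kh_k)\Bigr|\Bigr).$$
Now $f'(n;q_kh_k) = f'(n)\overline{f'(n+q_kh_k)}$ is again a product $\prod_{i=0}^{k-1} g_i(n)$ where $g_i(n) = f_i(n)\overline{f_i(n+q_kh_k)}$ is a function on $\Z/q_i\Z$ bounded by $1$. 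So for each fixed $h_k$ the inner sum is of exactly the form to which the induction hypothesis (with $k-1$, modulus $q'$, functions $g_i$) applies, over the interval $I(h_k)$ of length $\le N$. Raising the displayed inequality to the power $2^{k-1}$, distributing the sum over $h_k$ inside using the triangle inequality (there are $O(N/q_k)$ terms, each raised to the $2^{k-1}$ power, costing a factor $(N/q_k)^{2^{k-1}-1}$ by Hölder/convexity), and substituting the induction hypothesis should produce the claimed bound after collecting exponents.

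The main obstacle is the bookkeeping of the error terms: I need to check that the geometric-type sum $\sum_{j=1}^{k-1} N^{2^{k-1}-2^{k-1-j}} q'^{\text{(stuff)}}_{\cdot}$ coming from the induction hypothesis, once multiplied by $q_k^{2^{k-1}}$ and by the $(N/q_k)^{2^{k-1}-1}$ factor from expanding the $h_k$-sum, reassembles into $\sum_{j=1}^{k} N^{2^k - 2^{k-j}} q_{k-j+1}^{2^{k-j}}$ after one more contribution (the ``$N$'' term in the parenthesis, handled separately, gives the $j=k$ term $N^{2^k-1}q_k$). One has to be a little careful that the indices $q_{k-j+1}$ shift correctly when passing from the $(k-1)$-level statement about $q' = q_0\cdots q_{k-1}$ to the $k$-level statement, and that the coefficient $N^{2^k-k-1}(q/q_0)$ on the main term comes out right: it is $(N/q_k)^{2^{k-1}-1} \cdot q_k^{2^{k-1}} \cdot N^{2^{k-1}-k}(q'/q_0)$, and one checks $q_k^{2^{k-1}} (N/q_k)^{2^{k-1}-1} = q_k N^{2^{k-1}-1}$, giving total $N$-exponent $(2^{k-1}-1)+(2^{k-1}-k) = 2^k-k-1$ and modulus factor $q_k \cdot (q'/q_0) = q/q_0$, as required. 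None of this is deep, but it is the step where an off-by-one error is easiest to make, so I would write it out carefully.
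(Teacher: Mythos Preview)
Your approach is exactly what the paper does: it states that the lemma is proved by induction on $k$ using Lemma \ref{onediff}, and refers to \cite{rhbx32} and \cite{mydivisor} for the details in specific cases, noting the general case is unchanged. Your bookkeeping for the main term is correct, but you made precisely the kind of off-by-one slip you warned yourself about in the error terms: the ``$N$'' term from the first application of Lemma \ref{onediff}, after raising to the $2^{k-1}$-th power, contributes $q_k^{2^{k-1}}N^{2^{k-1}}$, which is the $j=1$ summand $N^{2^k-2^{k-1}}q_k^{2^{k-1}}$, not the $j=k$ summand (the error terms from the induction hypothesis, after the shift $j\mapsto j+1$, fill in $j=2,\ldots,k$).
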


\begin{proof}
The result can be proved by induction using Lemma \ref{onediff}.  The details, for specific choices of $f$, have previously been given in Heath-Brown \cite{rhbx32} and our paper \cite{mydivisor}.  The proof is unchanged in the current,  more general, setting so we do not include it.
\end{proof}

\section{The $B$-Process}

The $q$-analogue of the van der Corput $B$-process is more commonly known simply as completion of a sum.  It has been known, in some form,  at least since the work of P\'olya and Vinogradov.   In order to state it we define the Fourier transform $\hat f:\Z/q\Z\rightarrow \C$ of a function $f:\Z/q\Z\rightarrow \C$ by 
$$\hat f(x)=\frac{1}{\sqrt{q}}\sum_{n\pmod q}f(n)e_q(nx),$$
where 
$$e_q(y)=e^{\frac{2\pi iy}{q}}.$$

\begin{lem}\label{Bprocess}
Suppose we are given a function $f:\Z/q\Z\rightarrow \C$ and an interval $I\subseteq [M,M+N)$ with $N\leq q$.  We assume, without loss of generality, that $M\in\Z$.  We then have 
$$\sum_{n\in I}f(n) \ll \frac{N\hat f(0)}{\sqrt{q}}+\frac{N\log q}{\sqrt q}\max_J\left|\sum_{x\in J}\hat f(x)e_q(-Mx)\right|,$$
where the maximum is taken over all intervals $J$ of length at most $q/N$ which do not contain $0$.  
\end{lem}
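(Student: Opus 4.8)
The plan is to prove Lemma \ref{Bprocess} by the standard completion technique: expand the indicator of the interval $I$ via the additive characters mod $q$, then rearrange to express the sum over $n$ in terms of $\hat f$ and a geometric-type kernel, and finally bound that kernel.

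First I would write $\sum_{n\in I}f(n)=\sum_{n\pmod q}f(n)\mathbf{1}_I(n)$ and insert the Fourier expansion $f(n)=\frac{1}{\sqrt q}\sum_{x\pmod q}\hat f(x)e_q(-nx)$ (the inversion formula for the transform defined just above). Interchanging the two summations gives
$$\sum_{n\in I}f(n)=\frac{1}{\sqrt q}\sum_{x\pmod q}\hat f(x)\sum_{n\in I}e_q(-nx).$$
Now I would split off the term $x=0$, which contributes $\frac{1}{\sqrt q}\hat f(0)\,|I|\ll \frac{N\hat f(0)}{\sqrt q}$, matching the first term in the claimed bound.

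Next I would handle the remaining sum over $x\not\equiv 0$. Writing $I\subseteq[M,M+N)$ with $M\in\Z$ and $n=M+m$, the inner sum is $e_q(-Mx)\sum_{m}e_q(-mx)$ over an interval of length $\le N$, which is a geometric series bounded by $\ll \min(N,\|x/q\|^{-1})$ where $\|\cdot\|$ denotes distance to the nearest integer. The key step is then to absorb this kernel into a maximum over short intervals $J$ of length at most $q/N$: using partial summation (Abel summation) against $\hat f$, or equivalently decomposing the range of $x$ dyadically according to the size of $\|x/q\|$, one shows
$$\frac{1}{\sqrt q}\Bigl|\sum_{x\not\equiv 0}\hat f(x)e_q(-Mx)\sum_{m}e_q(-mx)\Bigr|\ll \frac{N\log q}{\sqrt q}\max_J\Bigl|\sum_{x\in J}\hat f(x)e_q(-Mx)\Bigr|,$$
the $\log q$ arising from summing $1/\|x/q\|$ over the $O(q/N)$-blocks (there are $O(\log q)$ dyadic scales, and within each scale the geometric kernel is essentially constant so partial summation reduces the sum to a single interval $J$ of length $\ll q/N$ on which the kernel is $O(1)$, times the running length). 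Combining the two contributions yields the statement.

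The main obstacle — really the only non-routine point — is making the passage from the kernel $\min(N,\|x/q\|^{-1})$ to the clean maximum over intervals $J$ of length $\le q/N$ precise, i.e. carefully organizing the dyadic/partial-summation argument so that exactly one factor of $\log q$ appears and the intervals $J$ genuinely avoid $0$ and have length bounded by $q/N$. Everything else is bookkeeping with finite geometric sums. I would also remark that $\hat f(x)e_q(-Mx)$ is itself $1/\sqrt q$ times the completed sum of $f$ over a shifted interval, which is the conceptual reason this is the ``$B$-process'': it trades a sum of length $N$ to modulus $q$ for a sum of length $q/N$ involving the dual function $\hat f$.
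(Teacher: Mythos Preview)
Your proposal is correct and follows essentially the same route as the paper: the paper also writes $S=\sum_{x}\hat f(x)\overline{\hat I(x)}$ via Plancherel (equivalently your Fourier inversion), separates the $x=0$ term, bounds the geometric kernel $g(x)\ll q^{-1/2}\min(N,\|x/q\|^{-1})$, and then removes $g$ by partial summation on consecutive intervals of length $K=[q/N]$ (using the derivative bound $g'(x)\ll N/(\sqrt{q}\,x)$), with the $\log q$ coming from $\sum_{r\ll N}1/r$. The only point to tighten in your write-up is that the paper uses equal blocks of length $q/N$ rather than dyadic blocks, since on the $r$-th such block both $g$ and $K\cdot g'$ are $\ll N/(\sqrt{q}\,r)$, which is exactly what makes the partial summation produce intervals $J$ of length $\le q/N$ and a single $\log q$.
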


\begin{proof}
The Plancherel identity gives 
$$S:=\sum_{n\in I}f(n)=\sum_{x\pmod q}\hat f(x)\overline{\hat I(x)},$$
where $\hat I$ is the Fourier transform of the indicator function of $I$.  We can write 
$$\hat I(x)=\frac{1}{\sqrt q}\sum_{n\in I}e_q(nx)=e_q(Mx)\frac{1}{\sqrt q}\twosum{n\leq N}{M+n\in I}e_q(nx)=e_q(Mx)g(x),$$ 
say.  Since $g$ is given by a geometric series we have the well-known estimate 
$$g(x)\ll \frac{1}{\sqrt{q}}\min(N,\frac{1}{\|x/q\|}),$$
in which $\|.\|$ denotes the distance from a real number to the nearest integer.    In simple applications of completion this is used in conjunction with an estimate for the complete sums $\hat f(x)$.  In this work however, as in \cite{mydivisor}, we wish to be able to detect cancellation between the $\hat f(x)$ by means of applications of the $A$-process.  We therefore split the sum over $x$ into short intervals, on which the factor $g(x)$ may be removed by partial summation.

We identify the summation range $x\pmod q$ with the integers in $(-\frac{q}{2},\frac{q}{2}]$ and extend $g(x)$ to a smooth  function of $x$ on that interval.  We therefore write 
$$S\ll \frac{N\hat f(0)}{\sqrt{q}}+\sum_{-q/2<x<0}\hat f(x)e_q(-Mx)\overline{g(x)}+\sum_{0<x\leq q/2}\hat f(x)e_q(-Mx)\overline{g(x)}.$$ 
For $x\ne 0$ the above estimate for $g(x)$  gives 
$$g(x)\ll \frac{\sqrt{q}}{x}.$$
In addition we have 
$$g'(x)=\frac{2\pi i}{\sqrt{q}} \twosum{n\leq N}{M+n\in I}\frac{n}{q}e_q(nx)\ll \frac{N}{\sqrt{q}x}.$$
This enables us to remove the weight $g(k)$ on intervals of length $q/N$. Specifically, we let $K=[q/N]$ and use partial summation to show that for any $L\leq K$ and any $r\in \N$ we have
$$\sum_{(r-1)K<x\leq (r-1)K+L}\hat f(x)e_q(-Mx)\overline{g(x)}\ll \frac{N}{\sqrt{q}r}\max_{0<L'\leq K}\left|\sum_{(r-1)K<x\leq (r-1)K+L'}\hat f(x)e_q(-Mx)\right|.$$
An analogous bound holds on intervals of length $K$ when $x<0$.  We may therefore divide the sum into $O(N)$ intervals of this form and conclude that 
$$S\ll \frac{N\hat f(0)}{\sqrt{q}}+\frac{N\log q}{\sqrt q}\max_J\left|\sum_{x\in J}\hat f(x)e_q(-Mx)\right|,$$
where the maximum is as described in the statement of the lemma.  The term $\log q$ arises from the estimate 
$$\sum_{r\ll N}\frac1r \ll \log N+1\ll \log q.$$
\end{proof}

We also give a version of the last lemma which is less precise but which allows the length of the sum to exceed the modulus $q$.  This is essentially the usual completion of sums, used by P\'olya and Vinogradov etc, so we do not include a proof.  An almost identical result is given by Iwaniec and Kowalski in \cite[Lemma 12.1]{ik}.  

\begin{lem}\label{completion}
Suppose we are given a function $f:\Z/q\Z\rightarrow \C$ and an interval $I\subseteq [M,M+N)$. We then have 
$$\sum_{n\in I}f(n) \ll \frac{N\hat f(0)}{\sqrt{q}}+\frac{1}{\sqrt{q}}\sum_{x\not\equiv 0\pmod q}\frac{|\hat f(x)|}{\|x/q\|}.$$
\end{lem}

\section{Complete Sums}

In the course of our proof of Theorem \ref{charthm} we will need to deal with two different complete  sums to a squarefree modulus.  In both cases the Chinese Remainder Theorem is used to write the sum as a product of complete sums to prime moduli.  The latter are then bounded by an appeal to a suitable form of the Riemann hypothesis over finite fields.  The first sum is $1$-dimensional so the necessary result follows from the work of Weil \cite{weil1}.  The second sum is much more complex.  The necessary result can be derived from the work of Deligne \cite{deligne2}, the details are given by Fouvry, Kowalski and Michel in the appendix.  

\subsection{A $1$-Dimensional Sum}

Suppose $\chi\pmod q$ is a primitive Dirichlet character.  For integers $h$ and $x$ we consider the sum 
$$W_{\chi,h}(x)=\sum_{n\pmod q}\chi(n)\overline{\chi(n+h)}e_q(nx).$$
A more general form of this sum was discussed by Graham and Ringrose in \cite[Section 4]{grahamringrose}.  We begin by observing that if $(a,q)=1$ then 
\begin{eqnarray*}
W_{\chi,h}(ax)&=&\sum_{n\pmod q}\chi(n)\overline{\chi(n+h)}e_q(anx)\\
&=&\sum_{n\pmod q}\chi(\overline an)\overline{\chi(\overline an+h)}e_q(nx)\\
&=&\sum_{n\pmod q}\chi(n)\overline{\chi(n+ah)}e_q(nx)\\
&=&W_{\chi,ah}(x).\\
\end{eqnarray*}
Combining this with a special case of \cite[Lemma 4.1]{grahamringrose} we can deduce the following multiplicative property.  

\begin{lem}\label{Wmult}
Suppose $q=uv$ with $(u,v)=1$ and that $\chi$ is a primitive Dirichlet character modulo $q$.  Then there exist primitive characters $\chi_u\pmod u$ and $\chi_v\pmod v$ such that 
$$W_{\chi,h}(x)=W_{\chi_u,\overline vh}(x)W_{\chi_v,\overline uh}(x)$$
where $u\overline u\equiv 1\pmod v$ and $v\overline v\equiv 1\pmod u$.
\end{lem}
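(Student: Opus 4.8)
\emph{Proof proposal.} The plan is to use the Chinese Remainder Theorem to split $W_{\chi,h}(x)$ into a product of the same kind of sum to the two coprime moduli $u$ and $v$, and then to convert the twists that appear on the additive variable into twists on $h$ by appealing to the identity $W_{\chi,h}(ax)=W_{\chi,ah}(x)$ established just above the statement. Since $\chi$ is primitive modulo $q=uv$ with $(u,v)=1$, the factorisation $\chi=\chi_u\chi_v$ supplied by the Chinese Remainder Theorem, with $\chi_u$ a character modulo $u$ and $\chi_v$ a character modulo $v$, automatically has both $\chi_u$ and $\chi_v$ primitive, because the conductor of $\chi$ is the product of the conductors of $\chi_u$ and $\chi_v$; these are the characters named in the statement.

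For the decomposition itself I would parametrise $n\pmod q$ by the pair $(a,b)$ with $a\equiv n\pmod u$ and $b\equiv n\pmod v$, so that $\chi(n)=\chi_u(a)\chi_v(b)$ and $\chi(n+h)=\chi_u(a+h)\chi_v(b+h)$. Using the partial-fraction identity $\tfrac1{uv}\equiv\tfrac{\overline v}{u}+\tfrac{\overline u}{v}\pmod 1$, in which $v\overline v\equiv 1\pmod u$ and $u\overline u\equiv 1\pmod v$ exactly as in the statement, one gets $e_q(nx)=e_u(\overline v ax)\,e_v(\overline u bx)$. The summand then factors as a function of $a$ alone times a function of $b$ alone, and the sum splits:
\[
W_{\chi,h}(x)=\Bigl(\sum_{a\pmod u}\chi_u(a)\overline{\chi_u(a+h)}\,e_u\bigl((\overline v x)a\bigr)\Bigr)\Bigl(\sum_{b\pmod v}\chi_v(b)\overline{\chi_v(b+h)}\,e_v\bigl((\overline u x)b\bigr)\Bigr)=W_{\chi_u,h}(\overline v x)\,W_{\chi_v,h}(\overline u x).
\]
This is the content of the special case of \cite[Lemma 4.1]{grahamringrose} referred to above. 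Finally, since $(\overline v,u)=1$ and $(\overline u,v)=1$, applying the identity $W_{\psi,h}(ax)=W_{\psi,ah}(x)$ once with $\psi=\chi_u$, $a=\overline v$ and once with $\psi=\chi_v$, $a=\overline u$ turns the additive-variable twists into twists on $h$, giving $W_{\chi,h}(x)=W_{\chi_u,\overline v h}(x)\,W_{\chi_v,\overline u h}(x)$, as required.

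I do not expect a genuine obstacle here: the argument is essentially bookkeeping with the Chinese Remainder Theorem on top of the already-proven transformation law. The only points that require care are keeping straight which modular inverse enters where in the partial-fraction identity for $\tfrac1{uv}$ (the inverse of $v$ is taken modulo $u$, the inverse of $u$ modulo $v$), and verifying that primitivity of $\chi$ descends to $\chi_u$ and $\chi_v$; both are standard.
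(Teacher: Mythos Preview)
Your proposal is correct and follows exactly the route the paper indicates: the Chinese Remainder Theorem splitting of the sum (which is the special case of \cite[Lemma~4.1]{grahamringrose} the paper cites) followed by the identity $W_{\psi,h}(ax)=W_{\psi,ah}(x)$ to move the twist from the additive variable onto $h$. You have simply written out in full the two ingredients the paper combines by reference.
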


Using the results of Weil \cite{weil1}, Graham and Ringrose \cite[Lemmas 4.2 and 4.3]{grahamringrose} obtained the following estimate for the sum to a prime modulus.  Our bound is slightly weaker than theirs as we do not include their better estimate when $p|h$ but $p\nmid x$.  

\begin{lem}\label{weilprime}
If $p$ is prime and $\chi$ is a primitive Dirichlet character modulo $p$ then for any integers $h$ and $x$ we have 
$$W_{\chi,h}(x)\ll \begin{cases}
p & h\equiv x\equiv 0\pmod p\\
\sqrt{p} & \text{otherwise.}\\
\end{cases}$$
\end{lem}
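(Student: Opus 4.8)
The plan is to exploit the fact that the modulus is already prime, so no Chinese Remainder factorisation is needed, and to split into cases according to whether $p\mid h$ and whether $p\mid x$. The two degenerate cases will be handled by a direct computation, and the generic case by Weil's bound for mixed multiplicative--additive character sums over $\mathbb{F}_p$.

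First I would dispose of the case $p\mid h$. Then $\overline{\chi(n+h)}=\overline{\chi(n)}$ for every $n$, so
$$W_{\chi,h}(x)=\sum_{\substack{n\pmod p\\(n,p)=1}}e_p(nx).$$
If in addition $p\mid x$ this equals $p-1\ll p$, which is the first case of the lemma. If instead $p\nmid x$, then $\sum_{n\pmod p}e_p(nx)=0$ and the sum above equals $-1$; in particular it is $\ll 1\ll\sqrt p$. (Here one could keep the sharper bound $\ll 1$, which is the stronger estimate of Graham and Ringrose that we have elected not to carry through.)

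It remains to treat $p\nmid h$, where we must show $W_{\chi,h}(x)\ll\sqrt p$ for all $x$. Since $\chi$ vanishes on multiples of $p$ and $h\not\equiv 0\pmod p$, we may write
$$W_{\chi,h}(x)=\sum_{\substack{n\pmod p\\ n\not\equiv 0,-h}}\chi\!\left(\frac{n}{n+h}\right)e_p(nx),$$
which is a mixed character sum of the shape $\sum_{n}\chi(f(n))\,\psi(n)$ with $f(T)=T/(T+h)\in\mathbb{F}_p(T)$ and $\psi$ an additive character. The rational function $f$ has a single simple zero, at $T=0$, and a single simple pole, at $T=-h$, so $f$ is not of the form $c\,g(T)^{d}$ for any constant $c$, any rational function $g$, and any integer $d\ge 2$; in particular this holds for $d=\operatorname{ord}(\chi)\ge 2$, recalling that a primitive character to a prime modulus is nontrivial. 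The additive argument $n\mapsto nx$ is a polynomial, so there are no extra poles to account for, and the degrees and the numbers of zeros and poles of $f$ are absolutely bounded. Weil's estimate for such sums — as used by Graham and Ringrose, ultimately resting on \cite{weil1} — therefore yields $W_{\chi,h}(x)\ll\sqrt p$ with an absolute implied constant, uniformly in $\chi$, $h$ and $x$, which settles all the remaining cases.

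The one genuinely non-elementary ingredient, and hence the step to handle with care, is this last one: quoting Weil's bound in the right generality and checking the non-degeneracy hypothesis. The points that matter are that the additive character is applied to a polynomial rather than a rational function with poles, and that the multiplicative argument $n/(n+h)$ fails to be a perfect $\operatorname{ord}(\chi)$-th power since it has a simple zero — this is what forces square-root cancellation whether or not $p\mid x$. One could instead substitute $m=n/(n+h)$ and rewrite the sum as $\sum_{m\ne 0,1}\chi(m)\,e_p\!\left(\tfrac{mh}{1-m}\,x\right)$ before applying Weil, but the non-degeneracy verification is no easier in that form, so I would keep the formulation above.
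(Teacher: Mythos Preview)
Your argument is correct. The paper does not give its own proof of this lemma at all: it simply cites Graham and Ringrose \cite[Lemmas 4.2 and 4.3]{grahamringrose}, noting that their estimate is in fact sharper in the subcase $p\mid h$, $p\nmid x$ (where you obtained $O(1)$ and then discarded it). Your case split and your appeal to Weil's bound for mixed character sums, together with the non-degeneracy check that $T/(T+h)$ has a simple zero and hence is not a $d$-th power for $d=\operatorname{ord}(\chi)\geq 2$, is exactly the standard route one finds in that reference, so there is nothing to compare.
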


Observe that the bound in the lemma can be expressed more compactly as $p^{\frac12}(h,x,p)^{\frac12}$.  We may therefore combine our results to obtain a bound for squarefree moduli.

\begin{lem}\label{weilW}
Suppose $q$ is squarefree and $\chi\pmod q$ is a primitive character.  Then, for any integers $h,x$ and any $\epsilon>0$ we have 
$$W_{\chi,h}(x)\ll_\epsilon q^{\frac12+\epsilon}(h,x,q)^{\frac12}.$$
\end{lem}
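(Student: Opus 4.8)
The plan is to combine the multiplicativity from Lemma \ref{Wmult} with the prime-modulus bound of Lemma \ref{weilprime} via the Chinese Remainder Theorem. First I would write $q=\prod_{p\mid q}p$, which is a product of distinct primes since $q$ is squarefree, and apply Lemma \ref{Wmult} inductively (pairing off one prime at a time, with $u=p$ and $v=q/p$) to obtain a factorisation
$$W_{\chi,h}(x)=\prod_{p\mid q}W_{\chi_p,\,\overline{(q/p)}\,h}(x),$$
where each $\chi_p$ is a primitive character modulo $p$ and $\overline{(q/p)}$ denotes the inverse of $q/p$ modulo $p$. The key point to check here is that the inverses introduced at each stage of the iteration are units modulo the relevant prime, so that each factor really is of the shape covered by Lemma \ref{weilprime}; this is where the computation $W_{\chi,h}(ax)=W_{\chi,ah}(x)$ for $(a,q)=1$ already recorded in the excerpt is used, ensuring the twist by a unit does not affect the size of the factor.

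Next I would apply Lemma \ref{weilprime} to each factor. Since multiplying $h$ by the unit $\overline{(q/p)}$ does not change whether $p\mid h$, the bound for the $p$-th factor is $p^{1/2}(h,x,p)^{1/2}$ — here using the compact form $p^{1/2}(h,x,p)^{1/2}$ noted just after Lemma \ref{weilprime}, valid in both cases. Taking the product over $p\mid q$ gives
$$W_{\chi,h}(x)\ll \prod_{p\mid q}p^{1/2}(h,x,p)^{1/2}=q^{1/2}\prod_{p\mid q}(h,x,p)^{1/2}.$$
By multiplicativity of the gcd over the distinct prime divisors of the squarefree number $q$, we have $\prod_{p\mid q}(h,x,p)=(h,x,q)$, so this already yields $W_{\chi,h}(x)\ll q^{1/2}(h,x,q)^{1/2}$.

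Finally I would account for the implied constants. The bound in Lemma \ref{weilprime} carries an absolute implied constant $C$, so the product picks up a factor $C^{\omega(q)}$, where $\omega(q)$ is the number of prime divisors of $q$; since $\omega(q)\ll \log q/\log\log q$, this is $\ll_\epsilon q^\epsilon$ for any $\epsilon>0$. Absorbing this gives the stated estimate
$$W_{\chi,h}(x)\ll_\epsilon q^{1/2+\epsilon}(h,x,q)^{1/2}.$$
I do not anticipate a serious obstacle here: the only mild subtlety is bookkeeping the unit twists through the iterative application of Lemma \ref{Wmult} and confirming they are harmless, but this is immediate from the $W_{\chi,h}(ax)=W_{\chi,ah}(x)$ identity. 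The argument is a routine CRT-plus-Weil assembly, entirely parallel to the standard reduction of Kloosterman or Gauss sums to prime moduli.
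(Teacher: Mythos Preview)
Your proposal is correct and is exactly the argument the paper has in mind: the paper does not write out a proof of Lemma~\ref{weilW} but simply notes that the prime-modulus bound can be written as $p^{1/2}(h,x,p)^{1/2}$ and then ``combine[s] our results'', i.e.\ applies Lemma~\ref{Wmult} iteratively and multiplies the Lemma~\ref{weilprime} bounds over $p\mid q$. Your bookkeeping of the unit twists and of the implied constant $C^{\omega(q)}\ll_\epsilon q^\epsilon$ is precisely what is needed to make this sketch rigorous.
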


\subsection{A Higher Dimensional Sum}

We now turn our  attention to a complete sum constructed from products of the sums $W$.  Recall that for a function $f(n)$ and integers $h_1,\ldots,h_k$ we defined 
$$f(n;h_1,\ldots,h_k)=\prod_{I\subseteq \{1,\ldots,k\}}f\left(n+\sum_{i\in I}h_i\right)^{\sigma(I)}.$$
Using this notation we let 
$$K_{\chi,h}(h_1,\ldots,h_k,y)=\sum_{x\pmod q}e_q(xy)W_{\chi,h}(x;h_1,\ldots,h_k).$$
We begin by establishing a multiplicative property.

\begin{lem}\label{Kmult}
If $\chi$ is a primitive character mod $q=uv$ with $(u,v)=1$ then 
$$K_{\chi,h}(h_1,\ldots,h_k,y)=K_{\chi_u,\overline vh}(h_1,\ldots,h_k,\overline vy)K_{\chi_v,\overline uh}(h_1,\ldots,h_k,\overline uy).$$
\end{lem}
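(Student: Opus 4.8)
The plan is to deduce this multiplicative property for $K_{\chi,h}$ directly from the corresponding property for $W_{\chi,h}$ established in Lemma \ref{Wmult}, combined with the Chinese Remainder Theorem for the additive character $e_q$. First I would recall that since $W_{\chi,h}(x;h_1,\ldots,h_k)$ is by definition a product $\prod_{I\subseteq\{1,\ldots,k\}}W_{\chi,h}(x+\sum_{i\in I}h_i)^{\sigma(I)}$ over $x$, and Lemma \ref{Wmult} factors each individual $W_{\chi,h}(\cdot)$ as $W_{\chi_u,\overline v h}(\cdot)W_{\chi_v,\overline u h}(\cdot)$, the whole product splits as
$$W_{\chi,h}(x;h_1,\ldots,h_k)=W_{\chi_u,\overline v h}(x;h_1,\ldots,h_k)\,W_{\chi_v,\overline u h}(x;h_1,\ldots,h_k),$$
where I should check that the arguments $x+\sum_{i\in I}h_i$ inside the $(\cdot;h_1,\ldots,h_k)$ notation are compatible with the shift $h\mapsto\overline v h$ (resp. $\overline u h$) uniformly in $I$ — they are, because Lemma \ref{Wmult} changes only the subscript $h$, not the variable being evaluated.

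Next I would invoke the Chinese Remainder Theorem. Writing $x\equiv x_u\pmod u$, $x\equiv x_v\pmod v$ via $x = v\overline v x_u + u\overline u x_v$ (or the standard CRT bijection), the additive character factors as $e_q(xy)=e_u(x_u \overline v y)e_v(x_v \overline u y)$, using $1/q \equiv \overline v/u$ modulo $1$ in the relevant sense — more precisely $e_{uv}(xy)=e_u(x \overline v y)e_v(x\overline u y)$ for $x$ ranging over residues mod $q$. Substituting this and the factorization of $W$ from the previous paragraph, the sum over $x\pmod q$ decouples into a sum over $x_u\pmod u$ times a sum over $x_v\pmod v$:
$$K_{\chi,h}(h_1,\ldots,h_k,y)=\left(\sum_{x_u\pmod u}e_u(x_u\overline v y)W_{\chi_u,\overline v h}(x_u;h_1,\ldots,h_k)\right)\left(\sum_{x_v\pmod v}e_v(x_v\overline u y)W_{\chi_v,\overline u h}(x_v;h_1,\ldots,h_k)\right),$$
which is exactly $K_{\chi_u,\overline v h}(h_1,\ldots,h_k,\overline v y)\,K_{\chi_v,\overline u h}(h_1,\ldots,h_k,\overline u y)$ by the definition of $K$.

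The only mild subtlety — and the step I would be most careful about — is bookkeeping the various inverses $\overline u,\overline v$ consistently: one has to make sure that the $\overline v$ appearing in the shift $\overline v h$ (from Lemma \ref{Wmult}) is the same $\overline v$ that appears in $e_u(x_u\overline v y)$ from CRT, and similarly that the character $W_{\chi_u,\cdot}$ is genuinely a sum mod $u$ so that only residues of $x$ mod $u$ matter. Since $W_{\chi_u,h'}(x)=\sum_{n\pmod u}\chi_u(n)\overline{\chi_u(n+h')}e_u(nx)$ depends on $x$ only mod $u$, the reduction $x\mapsto x_u$ is harmless, and the same applies to the shifted arguments $x+\sum_{i\in I}h_i$. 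Everything else is a direct substitution, so no genuine obstacle arises; this lemma is purely formal bookkeeping built on Lemma \ref{Wmult} and CRT, and I would present it briefly.
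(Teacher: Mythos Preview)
Your proposal is correct and follows exactly the same approach as the paper: factor $W_{\chi,h}(x;h_1,\ldots,h_k)$ via Lemma~\ref{Wmult} applied to each term of the product, then use the Chinese Remainder Theorem to split both the additive character $e_q(xy)$ and the summation over $x\pmod q$. The paper's own proof is just the two-line version of what you wrote out in detail.
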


\begin{proof}
By Lemma \ref{Wmult} we obtain 
$$W_{\chi,h}(x;h_1,\ldots,h_k)=W_{\chi_u,\overline vh}(x;h_1,\ldots,h_k)W_{\chi_v,\overline uh}(x;h_1,\ldots,h_k).$$
The result then follows from the Chinese Remainder Theorem.
\end{proof}

It remains to estimate the sums $K$ when the modulus is prime.  

\begin{lem}
Let $\chi$ be a primitive Dirichlet character modulo a prime $p$, suppose $h\not\equiv 0\pmod p$ and let $h_1,\ldots,h_k,y$ be integers.  
\begin{enumerate}
\item If  $y\not\equiv 0\pmod p$ or if 
$$\prod_{i=1}^k h_i\not\equiv 0\pmod p$$
then 
$$K_{\chi,h}(h_1,\ldots,h_k,y)\ll_k p^{\frac{2^k+1}{2}}.$$

\item Otherwise we have the trivial estimate
$$K_{\chi,h}(h_1,\ldots,h_k,y)\ll_k p^{\frac{2^k+2}{2}}.$$
\end{enumerate}
\end{lem}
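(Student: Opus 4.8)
The plan is to reduce the estimation of $K_{\chi,h}(h_1,\ldots,h_k,y)$ for prime modulus $p$ to a bound for a complete multidimensional exponential sum over $\mathbb{F}_p$, and then to invoke Deligne's form of the Riemann hypothesis over finite fields. First I would unfold all the definitions: by the definition of $W_{\chi,h}(x;h_1,\ldots,h_k)$ as a product over subsets $I\subseteq\{1,\ldots,k\}$ of shifted-and-conjugated copies of $W_{\chi,h}(x)$, and the definition $W_{\chi,h}(x)=\sum_{n\pmod p}\chi(n)\overline{\chi(n+h)}e_p(nx)$, the sum $K_{\chi,h}$ becomes a sum over $x\pmod p$ and over $2^k$ inner variables $n_I$ (one for each subset $I$), of a product of character factors times $e_p(xy)$ times $e_p\bigl(x\sum_I \pm(n_I+\text{shift}_I)\bigr)$. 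The crucial point is that $x$ now appears only linearly in the exponential, so the sum over $x\pmod p$ is a complete additive character sum that vanishes unless a certain linear form in the $n_I$ (with the shifts) is $\equiv 0\pmod p$, in which case it contributes exactly $p$. This collapses $K_{\chi,h}$ to $p$ times a $(2^k-1)$-dimensional character sum with one linear constraint among the $n_I$ — equivalently a sum over $2^k-1$ free variables of a product of Dirichlet characters of Möbius-type (fractional linear) arguments.

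Next I would separate the two cases. For the trivial estimate in part (2), once one knows $y\equiv 0$ and $\prod h_i\equiv 0\pmod p$, it suffices to bound the remaining $(2^k-1)$-dimensional character sum trivially by $p^{2^k-1}$, which together with the factor $p$ from the $x$-sum gives $p^{2^k}=p^{(2^k+2)/2}$ — wait, that is only $p^{2^k}$, not $p^{(2^k+2)/2}$; so in fact the counting must be arranged so that after the constraint one has only $2^k/2+1 = 2^{k-1}+1$ effective variables, or else some further elementary savings (e.g. from the character orthogonality in the conjugated pairs) must already be used. The cleanest route is: group the $2^k$ factors into $2^{k-1}$ pairs $(I, I\triangle\{1\})$, each pair being $W_{\chi,h}(x+s)\overline{W_{\chi,h}(x+s')}$, expand only one representative per pair and use that the paired character sum already has square-root cancellation in its own variable from Weil (Lemma \ref{weilprime}); this replaces $2^k$ inner variables by $2^{k-1}$, and then the $x$-constraint removes one more, leaving $2^{k-1}$ free variables each saving $p^{1/2}$, for a total of $p\cdot p^{2^{k-1}/2}\cdot(\text{trivial over the rest})$. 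The precise bookkeeping of which variables retain square-root cancellation and which are estimated trivially is exactly what produces the exponents $(2^k+1)/2$ and $(2^k+2)/2$, and this bookkeeping is the main obstacle: one must check that under the hypothesis $h\not\equiv 0$ together with $y\not\equiv0$ or $\prod h_i\not\equiv0$, the resulting variety is such that the relevant sheaf has no trivial pieces, so that Deligne's bound applies with the full expected saving, whereas in the degenerate case one loses one factor of $p^{1/2}$.

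For part (1), the heart of the matter is to show that the $(2^k-1)$-dimensional complete sum obtained above exhibits square-root cancellation in every one of its $2^k-1$ variables (giving $p^{(2^k-1)/2}$, hence $K_{\chi,h}\ll_k p\cdot p^{(2^k-1)/2} = p^{(2^k+1)/2}$). I would set this up as a complete exponential/character sum attached to an explicit algebraic variety over $\mathbb{F}_p$ and appeal to the machinery of Fouvry–Kowalski–Michel from \cite{fkm} and Deligne \cite{deligne2}: one identifies the sum with $\sum$ of the trace function of a suitable constructible $\overline{\mathbb{Q}_\ell}$-sheaf, checks that this sheaf is mixed of weights $\le 0$ with no trivial summand (this is where the conditions $h\not\equiv0$ and ``$y\not\equiv0$ or $\prod h_i\not\equiv0$'' enter, ruling out the degeneracies where a Kloosterman-type or character sheaf becomes geometrically trivial), and concludes by the Riemann hypothesis. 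Since the excerpt states that the genuinely hard version of this estimate — a closely related higher-dimensional sum — is proved in the appendix by Fouvry, Kowalski and Michel, I expect the present lemma to be deduced from, or proved in parallel with, that appendix result; accordingly the main obstacle is not the additive-character collapse (which is elementary) but the verification that the cohomology of the associated sheaf is pure of the expected dimension with no exceptional pieces, i.e. a sheaf-theoretic irreducibility/non-triviality check.
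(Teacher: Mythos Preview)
Your overall instinct---reduce to the Fouvry--Kowalski--Michel/Deligne machinery for the nondegenerate case, and fall back on Weil for the trivial case---is right, but both halves of your plan take a longer route than the paper's, and your exponent bookkeeping for part~(2) derails as a result.

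For part~(2), do \emph{not} expand the $W$'s or sum over $x$ first. Since $(h,p)=1$, Lemma~\ref{weilprime} gives $|W_{\chi,h}(x+s_I)|\ll\sqrt p$ for every shift $s_I$, so the product $W_{\chi,h}(x;h_1,\ldots,h_k)$ over all $2^k$ subsets is $O_k(p^{2^{k-1}})$ uniformly in $x$. Summing trivially over the $p$ values of $x$ yields $p\cdot p^{2^{k-1}}=p^{(2^k+2)/2}$ immediately---no pairing, no expansion, no linear constraint. Your attempt to recover this exponent after first expanding is what caused the confusion.

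For part~(1), the paper does \emph{not} expand the $W$'s into a $(2^k-1)$-dimensional character sum and attack it with raw Deligne. Instead it keeps the sum one-dimensional in $x$: writing $\widetilde W(x)=p^{-1/2}W_{\chi,h}(x)$ as a normalised trace function, one has $K_{\chi,h}=p^{2^{k-1}}S(y)$ where
\[
S(y)=\sum_{x\pmod p} e_p(xy)\prod_{I\subseteq\{1,\ldots,k\}}\widetilde W\Bigl(x+\sum_{i\in I}h_i\Bigr)^{\sigma(I)}
\]
is exactly a ``sum of shifted products of a trace function twisted by an additive character'', which the appendix (Theorem~1 there) bounds as a black box by $O_k(\sqrt p)$ under a nondegeneracy hypothesis. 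This sidesteps your proposed high-dimensional sheaf analysis entirely.

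Finally, you are missing one concrete combinatorial step. The appendix's nondegeneracy condition is not literally ``$\prod h_i\not\equiv0$'': it is that some shift value $\sum_{i\in I}h_i$ occurs an \emph{odd} number of times among the $2^k$ shifts modulo $p$ (or $y\not\equiv0$). One then needs the elementary but not entirely obvious fact, quoted from \cite[Lemma~4.5]{mydivisor}, that if every such value occurs an even number of times modulo $p$ then necessarily $h_i\equiv0\pmod p$ for some $i$. That lemma is what converts the sheaf-theoretic nondegeneracy condition into the arithmetic one stated in the present lemma.
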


\begin{proof}
By increasing the value of the implied constant in the result we may assume that $p>5$.   Let $S(y)$ be the sum defined in the appendix with 
$$n_1=n_2=2^{k-1}$$
and with shifts $t_i,s_j$ given by $\sum_{i\in I}h_i$ ($t_i$ corresponding to $|I|$ even and $s_j$ to $|I|$ odd).  We then have 
$$K_{\chi,h}(h_1,\ldots,h_k,y)=p^{2^{k-1}}S(y)$$ 
so Theorem 1 from the appendix may be applied.  Since $n_1=n_2$ we obtain $S(y)\ll_k \sqrt{p}$ provided that either $y\not\equiv 0\pmod p$ or that one of the shifts $\sum_{i\in I}h_i$ occurs an odd number of times modulo $p$.  However, we showed in \cite[Lemma 4.5]{mydivisor} that if the latter condition  fails then $h_i\equiv 0\pmod p$ for at least one index $i$ and thus 
$$\prod_{i=1}^k h_i\equiv 0\pmod p.$$
This completes the proof of the first estimate and the second follows directly from Lemma \ref{weilprime}.
\end{proof}

Combining the last two lemmas we deduce a bound for $K$ when $q$ is squarefree.

\begin{lem}\label{Kbound}
Suppose $\chi$ is a primitive Dirichlet character modulo the squarefree integer $q$, $(h,q)=1$ and let $h_1,\ldots,h_k,y$ be integers.  Then, for any $\epsilon>0$, we have 
$$K_{\chi,h}(h_1,\ldots,h_k,y)\ll_{\epsilon,k} q^{\frac{2^k+1}{2}+\epsilon}\left(q,y,\prod_{i=1}^k h_i\right)^{\frac12}.$$
\end{lem}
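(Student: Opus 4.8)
The plan is to combine the multiplicativity of Lemma~\ref{Kmult} with the prime-modulus bound from the preceding lemma via the Chinese Remainder Theorem. First I would write $q=\prod_{p\mid q}p$, which is a genuine product of distinct primes since $q$ is squarefree. Iterating Lemma~\ref{Kmult} across this factorisation gives
$$K_{\chi,h}(h_1,\ldots,h_k,y)=\prod_{p\mid q}K_{\chi_p,\overline{(q/p)}h}(h_1,\ldots,h_k,\overline{(q/p)}y),$$
where each $\chi_p$ is a primitive character mod $p$ obtained from the successive applications of Lemma~\ref{Wmult}. The hypothesis $(h,q)=1$ guarantees that at each prime $p\mid q$ the relevant first shift $\overline{(q/p)}h$ is a unit mod $p$, so $h\not\equiv 0\pmod p$ and the prime-modulus lemma applies at every place.

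Next I would apply the prime-modulus lemma prime by prime. For a prime $p\mid q$, if $p\nmid y$ or $p\nmid\prod_i h_i$ — equivalently, if $p\nmid\bigl(y\cdot\prod_i h_i\bigr)$, or more precisely if $p$ does not divide $\gcd(y,\prod_i h_i)$ — we get the bound $p^{(2^k+1)/2}$; note that multiplying $y$ by the unit $\overline{(q/p)}$ does not change whether $p\mid y$. In the remaining case, when $p$ divides both $y$ and $\prod_i h_i$ (hence divides the gcd $d:=\gcd(q,y,\prod_i h_i)$), we use the trivial bound $p^{(2^k+2)/2}=p^{(2^k+1)/2}\cdot p^{1/2}$. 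Multiplying over all $p\mid q$, the extra factors $p^{1/2}$ occur precisely for $p\mid d$, and their product is at most $d^{1/2}$. This yields
$$K_{\chi,h}(h_1,\ldots,h_k,y)\ll_k \Bigl(\prod_{p\mid q}p^{(2^k+1)/2}\Bigr)\Bigl(q,y,\textstyle\prod_{i=1}^k h_i\Bigr)^{1/2}=q^{(2^k+1)/2}\Bigl(q,y,\textstyle\prod_{i=1}^k h_i\Bigr)^{1/2}.$$

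Finally I would absorb the product over primes of the implied constants (there are $\ll_\epsilon q^\epsilon$ primes dividing $q$, or rather the implied constant per prime is a fixed $C_k$ and $C_k^{\omega(q)}\ll_{\epsilon,k}q^\epsilon$) into a factor $q^\epsilon$, giving the stated bound $q^{(2^k+1)/2+\epsilon}(q,y,\prod_i h_i)^{1/2}$. I do not expect any real obstacle here: the only points requiring a little care are checking that the unit twists $\overline{(q/p)}h$ and $\overline{(q/p)}y$ in Lemma~\ref{Kmult} do not affect the relevant divisibility conditions (they do not, since $\overline{(q/p)}$ is a unit mod $p$), and verifying that the gcd condition distributes correctly over the primes — namely that $\gcd(q,y,\prod_i h_i)=\prod_{p\mid q}p^{\min(1,v_p(y),v_p(\prod_i h_i))}$, which is immediate as $q$ is squarefree. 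The bookkeeping of the $q^\epsilon$ from the divisor-type count of $\omega(q)$ is entirely standard.
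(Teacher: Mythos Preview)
Your proposal is correct and follows exactly the approach the paper intends: the paper simply states ``Combining the last two lemmas we deduce a bound for $K$ when $q$ is squarefree,'' and your argument supplies precisely those details --- factor via Lemma~\ref{Kmult}, apply the prime-modulus lemma at each $p\mid q$ (using $(h,q)=1$ to ensure the hypothesis $h\not\equiv 0\pmod p$), collect the extra $p^{1/2}$ at primes dividing $\gcd(q,y,\prod_i h_i)$, and absorb $C_k^{\omega(q)}$ into $q^\epsilon$. The checks you flag about the unit twists $\overline{(q/p)}$ not affecting divisibility are exactly the right points of care.
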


\section{Theorem \ref{charthm}: First Steps}

Before beginning the proof of Theorem \ref{charthm} we give the following well-known lemma.

\begin{lem}\label{gcdlem}
Suppose $q$ is an integer and $H\geq 1$.  For any $\epsilon>0$ we have 
$$\sum_{0<h\leq H}(h,q)\ll_\epsilon Hq^\epsilon$$
and 
$$\sum_{0<h\leq H}\frac{(h,q)}{h}\ll_\epsilon (Hq)^\epsilon.$$
\end{lem}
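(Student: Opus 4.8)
The plan is to handle both sums by the standard device of writing $(h,q)=\sum_{d\mid (h,q)}\varphi(d)$, or more simply by grouping according to the value $d=(h,q)$, and then using the divisor bound $\tau(q)\ll_\epsilon q^\epsilon$. For the first sum I would write
$$\sum_{0<h\leq H}(h,q)=\sum_{d\mid q}d\,\#\{h\le H: (h,q)=d\}\le \sum_{d\mid q}d\cdot\frac{H}{d}=H\sum_{d\mid q}1=H\tau(q),$$
where in the second step I bound the number of $h\le H$ divisible by $d$ by $H/d$ (allowing the loss of the coprimality constraint only makes the count larger, and $H/d\le H$ when $d\ge 1$; more carefully $\#\{h\le H: d\mid h\}\le H/d$). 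Since $q$ is $q^\delta$-smooth only elsewhere — here $q$ is arbitrary — we invoke the classical divisor bound $\tau(q)\ll_\epsilon q^\epsilon$ to conclude $\sum_{0<h\le H}(h,q)\ll_\epsilon Hq^\epsilon$.

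For the second sum the same grouping gives
$$\sum_{0<h\leq H}\frac{(h,q)}{h}=\sum_{d\mid q}d\twosum{0<h\le H}{(h,q)=d}\frac1h\le \sum_{d\mid q}d\sum_{\substack{0<m\le H/d}}\frac{1}{dm}=\sum_{d\mid q}\ \sum_{0<m\le H/d}\frac1m,$$
after writing $h=dm$. The inner sum over $m$ is $\ll \log(2H)\ll (H q)^\epsilon$ (or just $\ll 1+\log H$), so the total is $\ll \tau(q)\log(2H)\ll_\epsilon (Hq)^\epsilon$, again by the divisor bound and the fact that $\log(2H)\ll_\epsilon H^\epsilon$.

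There is essentially no obstacle here: the only inputs are the elementary counting bound $\#\{h\le H:d\mid h\}\le H/d$, the divisor bound $\tau(q)\ll_\epsilon q^\epsilon$, and $\log(2H)\ll_\epsilon H^\epsilon$. The one point requiring a word of care is the degenerate regime: if $H<1$ the sums are empty and the bounds hold trivially, and when $H\ge 1$ the estimate $\#\{h\le H: d\mid h\}\le H/d$ is valid for every $d\ge1$ since $\lfloor H/d\rfloor\le H/d$. So the proof is just the two displays above together with these standard facts.
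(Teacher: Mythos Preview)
Your proof is correct and follows exactly the same route as the paper: group by $d=(h,q)$, bound the number (or harmonic weight) of $h\le H$ with $d\mid h$ by $H/d$ (respectively $\sum_{m\le H/d}1/m$), and invoke $\tau(q)\ll_\epsilon q^\epsilon$. The paper only writes out the first sum and states that the second is similar, so your version is actually slightly more detailed.
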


\begin{proof}
We have 
\begin{eqnarray*}
\sum_{0<h\leq H}(h,q)&=&\sum_{d|q}\twosum{0<h\leq H}{(h,q)=d}d\\
&\leq&\sum_{d|q}H\\
&\leq&H\tau(q)\ll_\epsilon Hq^\epsilon.\\
\end{eqnarray*}
The second claim can be proved by a similar argument.  
\end{proof}

We suppose that $q$ is squarefree, $\chi$ is a primitive Dirichlet character modulo $q$  and that $I$ is an interval of length at most $N<q$.  The sum of interest is 
$$S=\sum_{n\in I}\chi(n)$$
and we wish to establish that 
$$S\ll_\epsilon N^{\frac{23}{41}}q^{\frac{11}{82}+O(\delta)+\epsilon}.$$
This follows from the trivial bound if $N\leq q^{\frac{11}{36}}$ and it is weaker than the P\'olya-Vinogradov inequality if $N\geq q^{\frac{15}{23}}$.  We therefore assume that 
\begin{equation}\label{Nsize}
q^{\frac{11}{36}}\leq N\leq q^{\frac{15}{23}}.
\end{equation}
The assumption that $q$ is squarefree implies that whenever $de|q$ we have $(d,e)=1$.  We will use this repeatedly in what follows.  

If $q$ factorises as $q=q_0q_1$ then $(q_0,q_1)=1$.  It follows by the Chinese Remainder Theorem that $\chi=\chi_0\chi_1$ for primitive characters $\chi_i\pmod {q_i}$.  We may therefore apply the $A$-process once, Lemma \ref{onediff}, with this factorisation to deduce that 
$$S^2\ll q_1\left(N+\sum_{0<|h|\leq N/q_1}\left|S(q_1h)\right|\right)$$ 
with 
$$S(q_1h)=\sum_{n\in I(h)}\chi_0(n)\overline{\chi_0(n+q_1h)}.$$
The magnitude of the sum $S(q_1h)$ depends on $(q_1h,q_0)=(h,q_0)$.  We will therefore use standard techniques, such as M\"obius inversion, to reduce $S(q_1h)$ to sums, $T$, of the same form in which the modulus and shift are coprime.  As a result we will need estimates for sums whose length and modulus are  different from those in $S(q_1h)$.  The following gives the necessary bound.  It is important to note that the quantities $q$ and $N$ need not be the same as those occurring in Theorem \ref{charthm}.

\begin{lem}\label{BAkB}
Suppose that $q=q_0q_1\ldots q_k$ is squarefree, $\chi$ is a primitive Dirichlet character modulo $q$  and $(h,q)=1$.  Let $I$ be an interval of length  at most $N\leq q$ and consider the sum 
$$T=\sum_{n\in I}\chi(n)\overline{\chi(n+h)}.$$
For any $\epsilon>0$ we have 
$$T\ll_{\epsilon,k} q^\epsilon\left(\sum_{j=1}^kN^{2^{-j}}q^{1/2-2^{-j}}q_{k-j+1}^{2^{-j}}+N^{2^{-k}}q^{1/2-2^{-k}}q_0^{1/2^{k+1}}\right).$$
\end{lem}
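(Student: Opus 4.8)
The plan is to prove Lemma \ref{BAkB} by applying the $q$-analogue of the $A^k B$ process: first iterate the $A$-process $k$ times via Lemma \ref{kdiff} to reduce the modulus from $q$ to $q_0$, then complete the resulting short sum using the $B$-process (Lemma \ref{Bprocess}), and finally bound the completed exponential sum using the Weil-type estimate of Lemma \ref{weilW}. Concretely, write $f(n) = \chi(n)\overline{\chi(n+h)}$, which factors as $\prod_{i=0}^k f_i(n)$ with $f_i(n) = \chi_i(n)\overline{\chi_i(n+h)}$ for the primitive characters $\chi_i \pmod{q_i}$ coming from the Chinese Remainder Theorem (here I use $(h,q)=1$ so each $\chi_i$ is genuinely a character twisted by the shift). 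Each $f_i$ satisfies $f_i(n) \ll 1$, so Lemma \ref{kdiff} applies and gives
$$\left|\sum_{n\in I}f(n)\right|^{2^k} \ll_k \sum_{j=1}^k N^{2^k - 2^{k-j}} q_{k-j+1}^{2^{k-j}} + N^{2^k - k - 1}(q/q_0)\sum_{0<|h_1|\le N/q_1}\cdots\sum_{0<|h_k|\le N/q_k}\left|\sum_{n\in I(h_1,\ldots,h_k)}f_0(n;q_1h_1,\ldots,q_kh_k)\right|.$$
The first sum already produces the terms $N^{2^{-j}}q^{1/2-2^{-j}}q_{k-j+1}^{2^{-j}}$ after taking $2^k$-th roots (using $N^{2^k-2^{k-j}} q_{k-j+1}^{2^{k-j}}$ raised to the power $2^{-k}$), so the work is in the second term.

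For the main term, I would apply the $B$-process (Lemma \ref{Bprocess}) to the inner sum $\sum_{n\in I(h_1,\ldots,h_k)} f_0(n;q_1h_1,\ldots,q_kh_k)$, a sum to modulus $q_0$ of length at most $N$. This bounds it by $\frac{N}{\sqrt{q_0}}\widehat{f_0}(0) + \frac{N\log q_0}{\sqrt{q_0}}\max_J|\sum_{x\in J}\widehat{f_0(\cdot;q_1h_1,\ldots,q_kh_k)}(x)e_{q_0}(-Mx)|$, where $J$ ranges over intervals of length $\le q_0/N$ not containing $0$. The Fourier transform of $f_0(\cdot;q_1h_1,\ldots,q_kh_k)$ at $x$ is, up to normalization, exactly a sum of the shape $W_{\chi_0,h}(x; q_1h_1,\ldots,q_kh_k)$ built from the $W_{\chi_0,h}$ defined in Section 4 — this is the point of introducing the $f(n;h_1,\ldots,h_k)$ notation. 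The diagonal term $\widehat{f_0}(0)$ is $q_0^{-1/2}$ times a complete sum $\sum_n f_0(n;q_1h_1,\ldots,q_kh_k)$; when all the $q_i h_i$ vanish mod $q_0$ this is $\ll q_0$, otherwise it exhibits cancellation, but in any case bounding it by $q_0$ and tracking the measure of the bad shifts via Lemma \ref{gcdlem} suffices.

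For the off-diagonal contribution I would interchange the order of summation, putting the sum over $h_1,\ldots,h_k$ inside, and estimate $\sum_{x\in J}\widehat{f_0}(x)e_{q_0}(-Mx)$ trivially by $\frac{1}{\sqrt{q_0}}\sum_{x\in J}|W_{\chi_0,h}(x)|$ after expanding the differenced transform — or, more cleanly, bound the whole sum over $h_i$ and $x$ by summing $|W_{\chi_0,h}(x; \ldots)|$ using Lemma \ref{weilW} on each Weil factor, giving $W_{\chi_0,h}(x) \ll q_0^{1/2+\epsilon}(h,x,q_0)^{1/2}$ per factor. Since $f_0(\cdot; q_1h_1,\ldots,q_kh_k)$ is a product of $2^k$ shifted copies of $f_0$, the transform $\widehat{f_0(\cdot;\ldots)}(x)$ unfolds into a sum over $2^k - 1$ auxiliary variables of products of $W$'s, but the standard trick (used in \cite{mydivisor} and \cite{rhbx32}) is to apply Cauchy-Schwarz or simply the trivial bound on all but one $W$ factor and Lemma \ref{weilW} on the last, then sum the gcd factors over $x \in J$ (a range of length $\le q_0/N$) and over the $h_i$ (ranges of length $N/q_i$). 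Collecting: the number of $(h_1,\ldots,h_k)$ tuples is $\ll \prod_{i=1}^k (N/q_i) = N^k/(q/q_0)$; the $x$-sum over an interval of length $q_0/N$ of $q_0^{1/2+\epsilon}$ contributes $q_0^{3/2+\epsilon}/N$ (the gcd factors costing only $q^\epsilon$ by Lemma \ref{gcdlem}); multiplying back the prefactors $N^{2^k-k-1}(q/q_0)$ and $\frac{N}{\sqrt{q_0}}$ from the two lemmas and the extra $\frac{1}{\sqrt{q_0}}$ from the transform normalization, the $(q/q_0)$ and $N^k$ cancel and one is left, after taking $2^{-k}$-th roots, with $N^{2^{-k}} q^{1/2 - 2^{-k}} q_0^{1/2^{k+1}}$, matching the claimed bound.

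The main obstacle will be the bookkeeping in the off-diagonal term: correctly unfolding the Fourier transform of the $2^k$-fold product $f_0(n; q_1h_1,\ldots,q_kh_k)$ into sums of the $W_{\chi_0,h}$, deciding which $W$-factors to estimate nontrivially versus trivially so that the exponents balance, and verifying that the gcd conditions $(q_1h_i, q_0)$ only contribute $q^\epsilon$ overall — this requires $q$ squarefree so that $(q_i, q_0) = 1$ and hence $(q_1 h_i, q_0) = (h_i, q_0)$, together with Lemma \ref{gcdlem}. The power-of-$N$ and power-of-$q$ accounting must be done carefully since several competing terms arise and only the stated combination survives; I expect this to be essentially identical to the computation in \cite[Section 4 or 5]{mydivisor}, modulo the replacement of Kloosterman fractions by the character sums $W_{\chi_0,h}$, so I would cite that argument for the routine parts and only spell out the points where the presence of the character (rather than a Kloosterman fraction) changes the structure of the complete sum.
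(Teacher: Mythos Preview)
Your ordering of the $A$ and $B$ processes is inverted relative to what the lemma actually requires, and this is not cosmetic: it leads to the wrong bound. The paper's proof applies $B$ \emph{first} to the $n$-sum, obtaining a sum over the dual variable $x$ in intervals of length $K=q/N$ with summand $q^{-1/2}W_{\chi,h}(x)e_q(-Mx)$; it then applies $A^k$ to this $x$-sum (using the multiplicativity of $W$, Lemma~\ref{Wmult}, to factor over $q_0,\ldots,q_k$); and finally it completes again (Lemma~\ref{completion}), producing the complete sum $K_{\chi_0,h}(h_1,\ldots,h_k,y)=\sum_x e_{q_0}(xy)\,W_{\chi_0,h}(x;h_1,\ldots,h_k)$, which is bounded by the Deligne-type estimate of Lemma~\ref{Kbound}. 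That higher-dimensional sum --- a sum over $x$ of a \emph{product} of $2^k$ shifted $W$'s --- is the heart of the argument and is exactly what the Fouvry--Kowalski--Michel appendix is for. Your proposal never produces this sum and never invokes Lemma~\ref{Kbound}.

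Concretely, two points fail. First, your claim that the Fourier transform (in $n$) of $f_0(n;q_1h_1,\ldots,q_kh_k)$ equals $W_{\chi_0,h}(x;q_1h_1,\ldots,q_kh_k)$ up to normalisation is false: the latter is by definition $\prod_I W_{\chi_0,h}(x+\sum_{i\in I}q_ih_i)^{\sigma(I)}$, a product of $W$'s shifted in the \emph{dual} variable $x$, whereas the Fourier transform of a $2^k$-fold product in $n$ is a single one-variable character sum (or, equivalently, a $(2^k-1)$-fold convolution). These are different objects, and only the former is what $K$ is built from. Second, your bookkeeping does not give the stated exponent. Carrying your $A^kB$ scheme through honestly (with the one-dimensional Weil bound $\hat g(x)\ll_k q_0^\epsilon$ on the differenced product, summing over $|J|\le q_0/N$, then over the $\prod_i N/q_i$ shift tuples, and finally taking $2^k$-th roots) yields the main term $N^{1-2^{-k}}q_0^{1/2^{k+1}}$, which is the Graham--Ringrose $A^kB$ bound, not the claimed $N^{2^{-k}}q^{1/2-2^{-k}}q_0^{1/2^{k+1}}$. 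The two agree only when $N=\sqrt{q}$; for the ranges used in Lemma~\ref{BA3B} they differ, and the $BA^kB$ form is what is needed downstream.
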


The last lemma should be compared to our estimate \cite[Theorem 1.3]{mydivisor}, in which a similar bound was obtained for short Kloosterman sums.  Both results use a $q$-analogue of the van der Corput $BA^kB$ estimate and the proofs are essentially the same apart from the use of different estimates for complete exponential sums.  In \cite{mydivisor} the result contains an extra term $q^{1/2}q_0^{-1/2^{k+1}}$.  In the course of the proof of Lemma \ref{BAkB} we will show that this term is not needed; the same argument could be used to remove it from \cite[Theorem 1.3]{mydivisor}. In the remainder of this section we show that Lemma \ref{BAkB} implies Theorem \ref{charthm}, we will prove the former in the next section.

The next result specialises Lemma \ref{BAkB} to $k=3$ and smooth moduli $q$.

\begin{lem}\label{BA3B}
Suppose $q$ is squarefree and $q^\delta$-smooth for some $\delta>0$.  Let $h$, $\chi$, $I$, $N$ and $T$ be as in Lemma \ref{BAkB}.  Assuming that $\delta$ is sufficiently small we have 
$$T\ll_{\epsilon}N^{\frac16}q^{\frac{11}{30}+O(\delta)+\epsilon}.$$
\end{lem}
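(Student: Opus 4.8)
The plan is to apply Lemma \ref{BAkB} with $k=3$ and to choose the four‑fold factorisation $q=q_0q_1q_2q_3$ so that the four resulting terms each meet the target bound $N^{1/6}q^{11/30}$. This is precisely the $q$‑analogue of optimising the van der Corput $BA^3B$ estimate: in the notation of \eqref{generalbound} the relevant exponent pair is $BA^3B(0,1)=(\tfrac{11}{30},\tfrac8{15})$, and $(q/N)^{11/30}N^{8/15}=N^{1/6}q^{11/30}$, so the lemma is exactly the assertion that the sequence of processes $BA^3B$, applied in their $q$‑analogue form to $f(n)=\chi(n)\overline{\chi(n+h)}$, gives the bound predicted by this pair.

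First I would dispose of short sums directly, with no use of smoothness. If $N\le q^{11/25}$ then $N^{5/6}\le q^{11/30}$, so the trivial estimate $T\ll N$ already gives $T\ll N^{1/6}\cdot N^{5/6}\le N^{1/6}q^{11/30}$. Hence we may assume $q^{11/25}<N\le q$, and it is convenient to write $N=q^\nu$ with $11/25<\nu\le1$.

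In this range set
$$\alpha_3=\tfrac{11}{15}-\tfrac{2\nu}{3},\qquad \alpha_2=\tfrac{7}{15}-\tfrac{\nu}{3},\qquad \alpha_1=\tfrac{\nu}{3}-\tfrac1{15},\qquad \alpha_0=\tfrac{2\nu}{3}-\tfrac2{15}.$$
One checks that $\alpha_0+\alpha_1+\alpha_2+\alpha_3=1$ and that, for $\nu$ in the stated interval, every $\alpha_i\ge\tfrac1{15}$. Since $q$ is $q^\delta$‑smooth, the standard greedy construction (repeatedly peeling off prime factors until a prescribed size is first exceeded, each overshoot being by a factor at most $q^\delta$) lets me factor $q=q_0q_1q_2q_3$ with $q_i=q^{\alpha_i+O(\delta)}$ for each $i$; the fact that all the $\alpha_i$ are bounded below by the absolute constant $\tfrac1{15}$ guarantees, for $\delta$ below an absolute threshold, that there is room to carry out all four extractions. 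Substituting into the $k=3$ case of Lemma \ref{BAkB} gives
$$T\ll_\epsilon q^\epsilon\bigl(N^{1/2}q_3^{1/2}+N^{1/4}q^{1/4}q_2^{1/4}+N^{1/8}q^{3/8}q_1^{1/8}+N^{1/8}q^{3/8}q_0^{1/16}\bigr),$$
and a direct computation with the chosen $\alpha_i$ shows that each of the four terms equals $q^{\nu/6+11/30+O(\delta)}=N^{1/6}q^{11/30+O(\delta)}$. For instance the first term is $q^{\nu/2+\alpha_3/2+O(\delta)}=q^{\nu/2+11/30-\nu/3+O(\delta)}=q^{\nu/6+11/30+O(\delta)}$, and the remaining three collapse to the same exponent after clearing denominators. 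Absorbing the $q^\epsilon$ and the $O(\delta)$ terms yields the lemma.

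The argument is not conceptually difficult once Lemma \ref{BAkB} is available; it is essentially a bookkeeping exercise in the exponent‑pair arithmetic together with the routine smooth‑factorisation lemma. The one point requiring care is matching the ranges: one must verify that the "trivial" range $N\le q^{11/25}$ is wide enough that beyond it the balancing exponents $\alpha_0,\dots,\alpha_3$ are all uniformly bounded away from $0$, so that the factorisation into four genuinely nontrivial pieces can be performed for every remaining $N$ and every sufficiently small $\delta$. This is why $q^{11/25}$, rather than the naive cutoff $q^{1/5}$ at which $\alpha_0$ and $\alpha_1$ vanish, is the convenient threshold to use.
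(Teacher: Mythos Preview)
Your proof is correct and follows essentially the same route as the paper: dispose of $N\le q^{11/25}$ trivially, then apply Lemma~\ref{BAkB} with $k=3$ using the factorisation $q_i\approx q^{\alpha_i}$ with exactly your exponents (the paper writes them as $Q_0=q^{-2/15}N^{2/3}$, $Q_1=q^{-1/15}N^{1/3}$, $Q_2=q^{7/15}N^{-1/3}$, $Q_3=q^{11/15}N^{-2/3}$, which are the same quantities), and check that all four terms balance to $N^{1/6}q^{11/30+O(\delta)}$. Your presentation is slightly more explicit in verifying the uniform lower bound $\alpha_i\ge\tfrac1{15}$ and in explaining the role of the cutoff $q^{11/25}$, but the argument is identical.
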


\begin{proof}
If $N\leq q^{\frac{11}{25}}$ then  $N^{\frac16}q^{\frac{11}{30}}\geq N$ so the result follows from the trivial bound $|T|\leq N$.  For the remainder of the proof we assume that $N\geq q^{\frac{11}{25}}$.  We apply Lemma \ref{BAkB} with $k=3$ and a factorisation $q=q_0q_1q_2q_3$ for which $q_j\approx Q_j$, where 
$$Q_0=q^{-\frac{2}{15}}N^{\frac23},$$
$$Q_1=q^{-\frac{1}{15}}N^{\frac13},$$
$$Q_2=q^{\frac{7}{15}}N^{-\frac13}$$
and
$$Q_3=q^{\frac{11}{15}}N^{-\frac23}.$$
Observe that $Q_0Q_1Q_2Q_3=q$.  Furthermore, by our assumptions that $q^{\frac{11}{25}}\leq N\leq q$, we know that all the $Q_j$ exceed some fixed power of $q$.    It follows that if $q$ is $q^\delta$-smooth, with $\delta$ sufficiently small, then we can find a factorisation with 
$$q_j\in [Q_jq^{-\delta},Q_j]\text{ for }j=1,2,3$$
and therefore 
$$q_0\in [Q_0,Q_0q^{3\delta}].$$
Note that this choice is not quite optimal if we wish to get the best dependence on $\delta$.  Lemma \ref{BAkB} now gives the required result.
\end{proof}

Observe that the estimate of Lemma \ref{BA3B} may be written 
$$T\ll_{\epsilon}q^{O(\delta)+\epsilon}(q/N)^{\frac{11}{30}}N^{\frac{8}{15}}$$
and that $(\frac{11}{30},\frac{8}{15})=BA^3B(0,1)$.  The sum $T$ may also be estimated by a direct application of Lemmas \ref{completion} and \ref{weilW}.  This gives the following result, which can handle the case $N>q$.

\begin{lem}\label{weilT}
Suppose $q$ is squarefree, $(h,q)=1$ and $\chi\pmod q$ is a primitive Dirichlet character modulo $q$.  Let $I$ be an interval of length at most $N$ and $T$ as in the previous lemmas.  We then have 
$$T\ll_\epsilon \left(\frac{N}{q}+1\right)q^{\frac12+\epsilon}.$$
\end{lem}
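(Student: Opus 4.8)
The plan is to bound $T$ by completing the sum via Lemma~\ref{completion} applied to the function $f(n)=\chi(n)\overline{\chi(n+h)}$, which is well-defined on $\Z/q\Z$, and then to insert the Weil-type estimate of Lemma~\ref{weilW} for its Fourier coefficients.

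First I would observe that, directly from the definitions, $\sqrt q\,\hat f(x)=W_{\chi,h}(x)$ for every residue $x\pmod q$. Lemma~\ref{weilW} then gives $\hat f(x)\ll_\epsilon q^\epsilon (h,x,q)^{1/2}$. The hypothesis $(h,q)=1$ forces any common divisor of $h$, $x$ and $q$ to divide $1$, so $(h,x,q)=1$, and hence $\hat f(x)\ll_\epsilon q^{\epsilon}$ uniformly in $x$ --- in particular $\hat f(0)\ll_\epsilon q^\epsilon$ as well.

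Next I would feed these bounds into Lemma~\ref{completion}. The main term there contributes $N\hat f(0)/\sqrt q\ll_\epsilon Nq^{-1/2+\epsilon}=(N/q)\,q^{1/2+\epsilon}$. In the remaining sum I would pull out the uniform bound $|\hat f(x)|\ll_\epsilon q^\epsilon$, leaving $q^{-1/2+\epsilon}\sum_{x\not\equiv 0\pmod q}\|x/q\|^{-1}$; taking representatives $x$ in $(-q/2,q/2]$ and using $\|x/q\|^{-1}\ll q/|x|$ together with $\sum_{0<|x|\le q/2}|x|^{-1}\ll\log q$ shows that this sum is $\ll q\log q$, so that the second term is $\ll_\epsilon q^{1/2+\epsilon}$ once the logarithm is absorbed into $q^\epsilon$. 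Adding the two contributions yields $T\ll_\epsilon (N/q+1)q^{1/2+\epsilon}$, as claimed.

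This argument is entirely routine; the only points deserving attention are the reduction $(h,x,q)=1$, which is what makes the Weil bound uniform in $x$ and is the reason the coprimality hypothesis on $h$ is imposed, and the fact that Lemma~\ref{completion} carries no restriction $N\le q$, so that the estimate remains valid in the regime $N>q$ relevant to later applications. I do not expect any genuine obstacle.
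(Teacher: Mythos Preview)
Your proposal is correct and is precisely the argument the paper has in mind: it states that Lemma~\ref{weilT} follows ``by a direct application of Lemmas~\ref{completion} and~\ref{weilW}'' without spelling out the details, and your write-up fills these in exactly as intended.
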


We now generalise to the case $(h,q)>1$.  We do not give an optimal treatment, preferring to give a simple estimate which is sufficiently sharp to prove Theorem \ref{charthm}.

\begin{lem}\label{Tbound}
Suppose that $\chi$ is a primitive Dirichlet character modulo the squarefree, $q^\delta$-smooth  integer $q$ and that $I$ is an interval of length at most $N\leq q$.  For any integer $h$ and any $\epsilon>0$ we have 
$$T\ll_{\epsilon}\sqrt{(h,q)}N^{\frac16}q^{\frac{11}{30}+O(\delta)+\epsilon}.$$
\end{lem}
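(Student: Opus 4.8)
The plan is to reduce the general $h$ case to the coprime case already handled in Lemma~\ref{BA3B} by pulling out the common factor $d=(h,q)$. Since $q$ is squarefree we may write $q=dq'$ with $(d,q')=1$, and correspondingly $\chi=\chi_d\chi'$ with $\chi_d$ a primitive character mod $d$ and $\chi'$ a primitive character mod $q'$. Write $h=dh'$. The first observation is that $\chi_d(n)\overline{\chi_d(n+h)}=\chi_d(n)\overline{\chi_d(n)}=1$ whenever $(n,d)=1$ (and both factors vanish otherwise), because $d\mid h$. Hence the $d$-part of the summand contributes only the condition $(n,d)=1$, and
$$T=\sum_{\substack{n\in I\\(n,d)=1}}\chi'(n)\overline{\chi'(n+h)}.$$

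Next I would remove the coprimality condition $(n,d)=1$ by M\"obius inversion: writing $\mathbf 1_{(n,d)=1}=\sum_{e\mid d,\,e\mid n}\mu(e)$ and substituting $n=em$, one gets
$$T=\sum_{e\mid d}\mu(e)\sum_{m\in I/e}\chi'(em)\overline{\chi'(em+h)}=\sum_{e\mid d}\mu(e)\overline{\chi'}(e)\chi'(e)\cdot(\text{shifted sum in }m),$$
but more usefully, since $(e,q')=1$ (as $e\mid d$), the inner sum is $\sum_{m\in I_e}\chi'(em)\overline{\chi'(em+h)}$ over an interval $I_e$ of length at most $N/e$. Writing $\overline e$ for the inverse of $e$ mod $q'$ and using multiplicativity, $\chi'(em)\overline{\chi'(em+h)}=\chi'(m)\overline{\chi'(m+\overline e h)}=\chi'(m)\overline{\chi'(m+\overline e d h')}$. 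Now the new shift is $\overline e d h'$, and its gcd with $q'$: since $(\overline e,q')=1$, $(d,q')=1$, we have $(\overline e d h',q')=(h',q')$. But $h'=h/d$ and $(h,q)=d$ forces $(h',q')=1$ (any prime dividing both $h'$ and $q'$ would, being coprime to $d$, contribute to $(h,q)/d$, contradiction). So each inner sum is exactly of the form $T$ in Lemma~\ref{BA3B} with coprime shift, interval length $\le N/e\le N$, and the same squarefree $q^\delta$-smooth modulus $q'$ (which divides $q$, hence is itself $q^\delta$-smooth and squarefree). Lemma~\ref{BA3B} therefore bounds each inner sum by $\ll_\epsilon N^{1/6}q^{11/30+O(\delta)+\epsilon}$ (using $q'\le q$ and that $q'$ still satisfies the smoothness hypothesis with the same $\delta$, after shrinking $\delta$ if necessary to absorb the constant).

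Summing over the divisors $e\mid d$ with $|\mu(e)|=1$ costs a factor $\tau(d)\le\tau(q)\ll_\epsilon q^\epsilon$, which is absorbed into $q^\epsilon$. This already yields $T\ll_\epsilon \tau(d)\,N^{1/6}q^{11/30+O(\delta)+\epsilon}\ll_\epsilon N^{1/6}q^{11/30+O(\delta)+\epsilon}$, which is stronger than the claimed bound (the $\sqrt{(h,q)}$ factor is not even needed). To match the statement exactly one simply notes $1\le\sqrt{(h,q)}$, so the stated bound follows a fortiori. The only mild subtlety is the case $d$ large, say $d$ comparable to $q$, so that $q'$ is small; but Lemma~\ref{BA3B} is stated for all $N\le q'$ and all squarefree $q^\delta$-smooth moduli, and includes the trivial range internally, so there is nothing to check — the main obstacle, such as it is, is just bookkeeping the shift gcd through the substitution $n=em$ and confirming $(\overline e d h',q')=1$, which I sketched above. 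No new exponential-sum input is required.
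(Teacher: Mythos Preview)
Your reduction via M\"obius inversion matches the paper's approach, but there is a genuine gap in the final step. Lemma~\ref{BA3B} (via Lemma~\ref{BAkB}) requires the interval length to be at most the modulus. After your substitution the inner sum has modulus $q'=q/d$ and length up to $N$ (when $e=1$), so you need $N\le q'$, which is \emph{not} guaranteed: you only assume $N\le q$. Your remark that Lemma~\ref{BA3B} ``includes the trivial range internally'' handles small $N$ relative to $q'$, not large $N$; when $N>q'$ the lemma simply does not apply.

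This is not a cosmetic issue: the stronger bound you claim, $T\ll_\epsilon N^{1/6}q^{11/30+O(\delta)+\epsilon}$ without the factor $\sqrt{(h,q)}$, is false. Take $h\equiv 0\pmod q$; then $T=\sum_{n\in I,\,(n,q)=1}1\asymp N\phi(q)/q$, so for $N=\sqrt{q}$ one has $T\gg q^{1/2-\epsilon}$, whereas $N^{1/6}q^{11/30}=q^{27/60}<q^{1/2}$. The $\sqrt{(h,q)}$ is therefore essential. The paper covers the range $N>q'$ by also invoking the completion bound of Lemma~\ref{weilT}, which gives $\ll (N/q'+1)(q')^{1/2+\epsilon}$; combining the two cases yields the extra term $N/\sqrt{q'}=N\sqrt{d}/\sqrt{q}$, and it is precisely this term that forces the factor $\sqrt{(h,q)}$ in the statement.
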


\begin{proof}
We write $(h,q)=d$ and $q=de$ (recall $(d,e)=1$ since $q$ is squarefree).  Factorising $\chi=\chi_d\chi_e$ for primitive characters $\chi_d\pmod d$ and $\chi_e\pmod e$ we obtain 
\begin{eqnarray*}
T&=&\sum_{n\in I}\chi(n)\overline{\chi(n+h)}\\
&=&\sum_{n\in I}\chi_d(n)\chi_e(n)\overline{\chi_d(n+h)\chi_e(n+h)}\\
&=&\sum_{n\in I}|\chi_d(n)|^2\chi_e(n)\overline{\chi_e(n+h)}\\
&=&\twosum{n\in I}{(n,d)=1}\chi_e(n)\overline{\chi_e(n+h)}\\
 &=&\sum_{d'|d}\mu(d')\twosum{n\in I}{d'|n}\chi_e(n)\overline{\chi_e(n+h)}\\
&\leq&\sum_{d'|d}\left|\sum_{n\in I/d'}\chi_e(n)\overline{\chi_e(n+\overline{d'}h)}\right|.\\
\end{eqnarray*}
In the final line 
$$I/d'=\{x:d'x\in I\}$$
and 
$$\overline{d'}d'\equiv 1\pmod e.$$
We have two bounds for the final sum, which has length at most $N/d'\leq N$ and modulus $e$.  Firstly, Lemma \ref{weilT} gives 
$$\sum_{n\in I/d'}\chi_e(n)\overline{\chi_e(n+\overline{d'}h)}\ll_\epsilon \left(\frac{N}{e}+1\right)e^{\frac12+\epsilon}.$$
Secondly, we can use Lemma \ref{BA3B}.  Since $e$ divides $q$ we know that it is $q^\delta$ smooth.  Therefore, if $e\geq q^{O(1)}$ then it is $e^{O(\delta)}$ smooth.  Therefore, provided that $N\leq e$, we obtain a bound 
$$N^{\frac16}e^{\frac{11}{30}+O(\delta)+\epsilon}\leq N^{\frac16}q^{\frac{11}{30}+O(\delta)+\epsilon}.$$
Combining these two estimates we obtain 
$$\sum_{d'|d}\left|\sum_{n\in I/d'}\chi_e(n)\overline{\chi_e(n+\overline{d'}h)}\right|\ll_\epsilon N^{\frac16}q^{\frac{11}{30}+O(\delta)+\epsilon}+\frac{N}{\sqrt{e}}$$
and thus 
$$T\ll_\epsilon N^{\frac16}q^{\frac{11}{30}+O(\delta)+\epsilon}+\frac{N\sqrt{(h,q)}}{\sqrt{q}}\leq \sqrt{(h,q)}\left(N^{\frac16}q^{\frac{11}{30}+O(\delta)+\epsilon}+\frac{N}{\sqrt{q}}\right).$$
The first term is largest since we are assuming that $N\leq q$ and thus the result follows.
\end{proof}

We now return to the sum $S$.  Recall that we have shown that 
$$S^2\ll q_1\left(N+\sum_{0<|h|\leq N/q_1}\left|S(q_1h)\right|\right)$$ 
where
$$S(q_1h)=\sum_{n\in I(h)}\chi_0(n)\overline{\chi_0(n+q_1h)}.$$
The quantity $q_0$ is $q^\delta$ smooth and therefore $q_0^{O(\delta)}$-smooth provided that $q_0\geq q^{O(1)}$.  Assuming $N\leq q_0$ we use Lemma \ref{Tbound} to obtain 
\begin{eqnarray*}
S^2&\ll_\epsilon& q^{O(\delta)+\epsilon} q_1\left(N+N^{\frac16}q_0^{\frac{11}{30}}\sum_{0<|h|\leq N/q_1}\sqrt{(h,q_0)}\right)\\
&\ll_\epsilon&q^{O(\delta)+\epsilon} \left(Nq_1+N^{\frac76}q_0^{\frac{11}{30}}\right),\\
\end{eqnarray*}
the final estimate following from Lemma \ref{gcdlem}.

We now let 
$$Q_0=N^{-\frac{5}{41}}q^{\frac{30}{41}}$$
and
$$Q_1=N^{\frac{5}{41}}q^{\frac{11}{41}}.$$
Observe that $Q_0Q_1=q$ and that, for $N\leq q$,  $Q_0,Q_1\geq q^{O(1)}$.  Therefore, if $\delta$ is sufficiently small, we may factorise $q=q_0q_1$ with 
$$q_0\in [Q_0,Q_0q^\delta]$$
and
$$q_1\in  [Q_1q^{-\delta},Q_1].$$
By our assumption (\ref{Nsize}) on the size of $N$ we know that $N\leq q_0$ and therefore the above arguments give 
$$S^2\ll_\epsilon N^{\frac{46}{41}}q^{\frac{11}{41}+O(\delta)+\epsilon}.$$
This completes the proof of Theorem \ref{charthm}.

\section{Proof of Lemma \ref{BAkB}}

We begin by proving the result in the case $\frac{q}{N}\leq q_0$.  Suppose that $I\subseteq [M,M+N]$.  We apply the $B$-process, Lemma \ref{Bprocess}, with $f(n)=\chi(n)\overline{\chi(n+h)}$, to obtain 
$$T\ll \frac{N\hat f(0)}{\sqrt{q}}+\frac{N\log q}{\sqrt q}\max_J\left|\sum_{x\in J}\hat f(x)e_q(-Mx)\right|,$$
where the maximum is taken over all intervals $J$ of length at most $q/N$ which do not contain $0$.  The Fourier transform is given by 
$$\hat f(x)=\frac{1}{\sqrt{q}}\sum_{n\pmod q}\chi(n)\overline{\chi(n+h)}e_q(nx)=\frac{1}{\sqrt{q}}W_{\chi,h}(x).$$
We know that $(h,q)=1$ and therefore Lemma \ref{weilW} gives 
$$\hat f(x)\ll_\epsilon q^\epsilon.$$
We may thus deduce that 
$$\frac{N\hat f(0)}{\sqrt{q}}\ll_\epsilon Nq^{-\frac12+\epsilon}$$
which is certainly sufficiently small.  Lemma \ref{BAkB} will therefore follow if we can show that 
\begin{eqnarray*}
\lefteqn{\sum_{x\in J}\frac{1}{\sqrt q}W_{\chi,h}(x)e_q(-Mx) }\\
&\ll_{\epsilon,k}&\frac{q^{\frac12+\epsilon}}{N}\left(\sum_{j=1}^kN^{2^{-j}}q^{1/2-2^{-j}}q_{k-j+1}^{2^{-j}}+N^{2^{-k}}q^{1/2-2^{-k}}q_0^{1/2^{k+1}}\right)\\
\end{eqnarray*}
for all intervals $J$ of length at most $q/N$.  On writing $K=q/N$ we see that it is sufficient to establish 
\begin{eqnarray*}
\lefteqn{\sum_{x\in J}\frac{1}{\sqrt q}W_{\chi,h}(x)e_q(-Mx) }\\
&\ll_{\epsilon,k}&q^{\epsilon}\left(\sum_{j=1}^kK^{1-2^{-j}}q_{k-j+1}^{2^{-j}}+K^{1-2^{-k}}q_0^{1/2^{k+1}}\right).\\
\end{eqnarray*}
This final estimate should be compared with existing $A^kB$ results, for example that given by Heath-Brown in \cite[Theorem 2]{rhbx32}.  In order to prove it we begin by applying Lemma \ref{kdiff} with 
$$f(x)=\frac{1}{\sqrt q}W_{\chi,h}(x)e_q(-Mx).$$
By Lemma \ref{Wmult} and the Chinese Remainder Theorem we may factorise this as 
$$f(x)=\prod_{i=0}^k f_i(x)$$ 
where
$$f_i(x)=\frac{1}{\sqrt{q_i}}W_{\chi_i,\overline{q/q_i}h}(x)e_{q_i}(-M\overline{q/q_i}x),$$
for some primitive characters $\chi_i\pmod{q_i}$.  We compute 
\begin{eqnarray*}
f_0(x;h_1,\ldots,h_k)&=&\prod_{I\subseteq \{1,\ldots,k\}}f_0\left(x+\sum_{i\in I}h_i\right)^{\sigma(I)}\\
&=&q_0^{-2^{k-1}}W_{\chi_0,\overline{q/q_0}h}(x;h_1,\ldots,h_k)\prod_{I\subseteq \{1,\ldots,k\}}e_{q_0}(-M\overline{q/q_0}(x+\sum_{i\in I}h_i)^{\sigma(I)}.\\
\end{eqnarray*}
Since $I$ runs over the same number of even and odd subsets we have
$$\prod_{I\subseteq \{1,\ldots,k\}}e_{q_0}(-Mx)^{\sigma(I)}=1.$$
It follows that 
$$\prod_{I\subseteq \{1,\ldots,k\}}e_{q_0}(-M\overline{q/q_0}(x+\sum_{i\in I}h_i))^{\sigma(I)}$$
depends only on the $h_i$, not on $x$, and therefore it can be taken outside the sum in Lemma \ref{kdiff}.  We obtain 
\begin{eqnarray*}
\lefteqn{\left|\sum_{x\in J}\frac{1}{\sqrt q}W_{\chi,h}(x)e_q(-Mx) \right|^{2^k}}\\
&\ll_{\epsilon,k}& q^\epsilon\left(\sum_{j=1}^k K^{2^k-2^{k-j}}q_{k-j+1}^{2^{k-j}}\right.\\
&&\hspace{1cm}\left.+K^{2^k-k-1}(q/q_0)\sum_{0<|h_1|\leq K/q_1}\ldots\sum_{0<|h_k|\leq K/q_k}\right.\\
&&\hspace{1cm}\left.q_0^{-2^{k-1}}\left|\sum_{x\in J(h_1,\ldots,h_k)}W_{\chi_0,\overline{q/q_0}h}(x;q_1h_1,\ldots,q_kh_k)\right|\right).\\
\end{eqnarray*}
We apply Lemma \ref{completion} to the final sum, followed by Lemma \ref{Kbound}, to obtain 
\begin{eqnarray*}
\lefteqn{\sum_{x\in J(h_1,\ldots,h_k)}W_{\chi_0,\overline{q/q_0}h}(x;q_1h_1,\ldots,q_kh_k)}\\
&\ll_{k,\epsilon}& q_0^{2^{k-1}+\epsilon}\left(\frac{K}{\sqrt{q_0}}\left(q_0,\prod_{i=1}^k q_ih_i\right)^{\frac12}+\frac{1}{\sqrt{q_0}}\sum_{x\not\equiv 0\pmod {q_0}}\frac{1}{\|x/q_0\|}\left(q_0,x,\prod_{i=1}^k q_ih_i\right)^{\frac12}\right)\\
&\ll_{k,\epsilon}& q_0^{2^{k-1}+\epsilon}\left(\frac{K}{\sqrt{q_0}}\left(q_0,\prod_{i=1}^k h_i\right)^{\frac12}+\sqrt{q_0}\sum_{0<x\leq q_0/2}\frac{1}{x}\left(q_0,x,\prod_{i=1}^k h_i\right)^{\frac12}\right).\\
\end{eqnarray*}
The second claim in Lemma \ref{gcdlem} yields 
$$\sum_{0<x\leq q_0/2}\frac{1}{x}\left(q_0,x,\prod_{i=1}^k h_i\right)^{\frac12}\ll_\epsilon q_0^\epsilon$$
and therefore, recalling that $K=\frac{q}{N}\leq q_0$, 
\begin{eqnarray*}
\lefteqn{\sum_{x\in J(h_1,\ldots,h_k)}W_{\chi_0,\overline{q/q_0}h}(x;q_1h_1,\ldots,q_kh_k)}\\
&\ll_{k,\epsilon}& q_0^{2^{k-1}+\epsilon}\left(\frac{K}{\sqrt{q_0}}\left(q_0,\prod_{i=1}^k h_i\right)^{\frac12}+\sqrt{q_0}\right)\\
&\ll_{k,\epsilon}& q_0^{\frac{2^k+1}{2}+\epsilon}\left(q_0,\prod_{i=1}^k h_i\right)^{\frac12}\left(\frac{K}{q_0}+1\right)\\
&\ll_{k,\epsilon}& q_0^{\frac{2^k+1}{2}+\epsilon}\left(q_0,\prod_{i=1}^k h_i\right)^{\frac12}.\\
\end{eqnarray*}
Consequently, we deduce that 
\begin{eqnarray*}
\lefteqn{\left|\sum_{x\in J}\frac{1}{\sqrt q}W_{\chi,h}(x)e_q(-Mx) \right|^{2^k}}\\
&\ll_{\epsilon,k}& q^\epsilon\left(\sum_{j=1}^k K^{2^k-2^{k-j}}q_{k-j+1}^{2^{k-j}}\right.\\
&&\left.+K^{2^k-k-1}(q/q_0^{\frac12})\sum_{0<|h_1|\leq K/q_1}\ldots\sum_{0<|h_k|\leq K/q_k}\left(q_0,\prod_{i=1}^k h_i\right)^{\frac12}\right).\\
\end{eqnarray*}
Finally we use Lemma \ref{gcdlem} to estimate 
\begin{eqnarray*}
\sum_{0<|h_1|\leq K/q_1}\ldots\sum_{0<|h_k|\leq K/q_k}\left(q_0,\prod_{i=1}^k h_i\right)^{\frac12}&\leq&\sum_{0<|h|\leq \frac{K^k}{q_1\ldots q_k}}\tau_k(h)(h,q_0)^{\frac12}\\ 
&\ll_{k,\epsilon}&q^\epsilon\sum_{0<|h|\leq \frac{K^kq_0}{q}}(h,q_0)\\ 
&\ll_{k,\epsilon}&q^\epsilon\frac{K^kq_0}{q}\\
\end{eqnarray*}
from which we obtain 
\begin{eqnarray*}
\lefteqn{\left|\sum_{x\in J}\frac{1}{\sqrt q}W_{\chi,h}(x)e_q(-Mx) \right|^{2^k}}\\
&\ll_{\epsilon,k}& q^\epsilon\left(\sum_{j=1}^k K^{2^k-2^{k-j}}q_{k-j+1}^{2^{k-j}}+K^{2^k-1}q_0^{\frac12}\right).\\
\end{eqnarray*}
This completes the proof of Lemma \ref{BAkB} in the case that $\frac{q}{N}\leq q_0$.  

If $\frac{q}{N}>q_0$ we will deduce the result by combining some of the factors and then working with a smaller value of $k$.  Specifically we let $l$ be the smallest integer for which 
$$\prod_{i=0}^l q_i >\frac{q}{N},$$
which exists since $\frac{q}{N}\leq q$.  We may now apply the case of Lemma \ref{BAkB} which we have already proven with the factorisation 
$$q=\prod_{i=0}^{k-l}r_i$$
where 
$$r_0=\prod_{i=0}^l q_i$$
and
$$r_i=q_{i+l}\text{ for }i>l.$$
We obtain 
\begin{eqnarray*}
T&\ll_{\epsilon,k}& q^\epsilon\left(\sum_{j=1}^{k-l}N^{2^{-j}}q^{1/2-2^{-j}}r_{k-l-j+1}^{2^{-j}}+N^{2^{l-k}}q^{1/2-2^{l-k}}r_0^{1/2^{k-l+1}}\right)\\
&=& q^\epsilon\left(\sum_{j=1}^{k-l}N^{2^{-j}}q^{1/2-2^{-j}}q_{k-j+1}^{2^{-j}}+N^{2^{l-k}}q^{1/2-2^{l-k}}\left(\prod_{i=0}^l q_i\right)^{1/2^{k-l+1}}\right).\\
\end{eqnarray*}
By definition of $l$ we know that 
$$\prod_{i=0}^{l-1}q_i\leq \frac{q}{N}$$
and therefore 
$$N^{2^{l-k}}q^{1/2-2^{l-k}}\left(\prod_{i=0}^l q_i\right)^{1/2^{k-l+1}}
\leq N^{2^{l-k}}q^{1/2-2^{l-k}}\left(\frac{qq_l}{N}\right)^{1/2^{k-l+1}}
=N^{2^{l-k-1}}q^{1/2-2^{l-k-1}}q_l^{2^{l-k-1}}.$$
We conclude that 
\begin{eqnarray*}
T&\ll_{\epsilon,k}& q^\epsilon\left(\sum_{j=1}^{k-l}N^{2^{-j}}q^{1/2-2^{-j}}q_{k-j+1}^{2^{-j}}+N^{2^{l-k-1}}q^{1/2-2^{l-k-1}}q_l^{2^{l-k-1}}\right).\\
&=&q^\epsilon\sum_{j=1}^{k-l+1}N^{2^{-j}}q^{1/2-2^{-j}}q_{k-j+1}^{2^{-j}}\\
\end{eqnarray*}
This  is sharper than the required estimate and therefore completes the proof of Lemma \ref{BAkB}.

\section{Proof of Theorem \ref{Lthm}}

We modify  the proof given by Iwaniec and Kowalski in \cite[Theorem 12.9]{ik}, replacing the Burgess bound by Theorem \ref{charthm}.  We have 
$$L(\frac12,\chi)\ll \left|\sum_n \frac{\chi(n)}{\sqrt{n}}V(\frac{n}{\sqrt{q}})\right|,$$
where $V$ is a certain smooth function which satisfies 
$$V(y)\ll (1+y)^{-1}$$
and 
$$V'(y)\ll \frac{1}{y}(1+y)^{-1}.$$
We have 
$$\left(x^{-\frac12}V\left(\frac{x}{\sqrt{q}}\right)\right)' \ll x^{-\frac32}\left(1+\frac{x}{\sqrt{q}}\right)^{-1}$$
so we may apply summation by parts to deduce that 
$$L(\frac12,\chi)\ll \int_1^\infty\left|\sum_{n\leq x}\chi(n)\right|x^{-\frac32}\left(1+\frac{x}{\sqrt{q}}\right)^{-1}\,dx.$$
By Theorem \ref{charthm} we obtain 
$$\sum_{n\leq x}\chi(n)\ll_\epsilon x^{\frac{23}{41}}q^{\frac{11}{82}+O(\delta)+\epsilon}$$
and therefore 
$$L(\frac12,\chi)\ll_\epsilon q^{\frac{11}{82}+O(\delta)+\epsilon}\left(\int_1^{\sqrt{q}}x^{\frac{23}{41}-\frac32}\,dx+\sqrt{q}\int_{\sqrt{q}}^\infty x^{\frac{23}{41}-\frac52}\,dx\right)\ll q^{\frac{27}{164}+O(\delta)+\epsilon}.$$

\addcontentsline{toc}{section}{References} 
\bibliographystyle{plain}
\bibliography{../biblio}

\bigskip
\bigskip

Centre de recherches math\'ematiques,

Universit\'e de Montr\'eal,

Pavillon Andr\'e-Aisenstadt,

2920 Chemin de la tour, Room 5357,

Montr\'eal (Qu\'ebec) H3T 1J4

\bigskip 

{\tt alastair.j.irving@gmail.com}
\newpage
\includepdf[pages={-}]{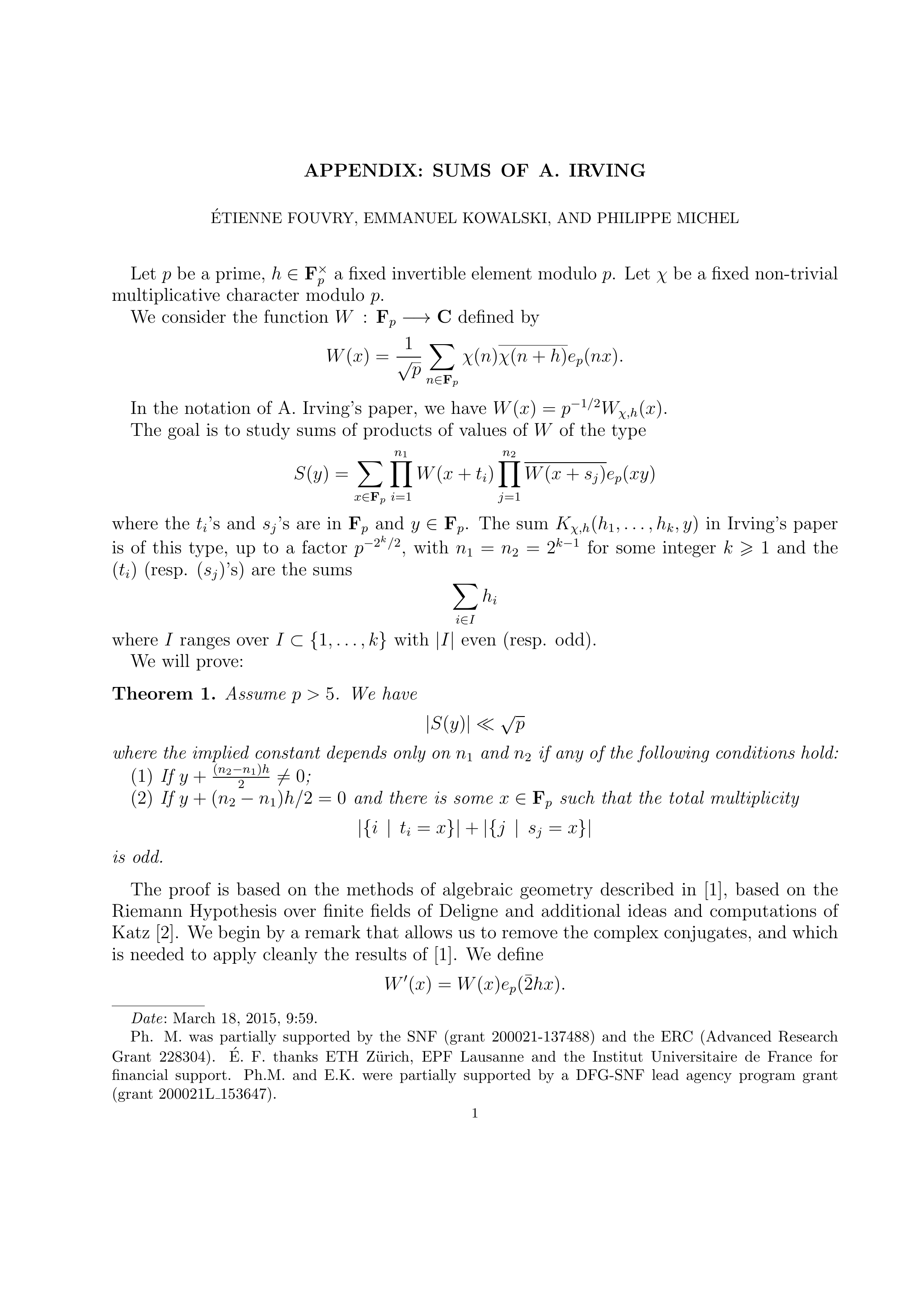}

\end{document}